\theoremstyle{plain}
\newtheorem{theorem}{Theorem}[section]
\newtheorem{lemma}{Lemma}[section]
 \newtheorem{corollary}[theorem]{Corollary}
 \newtheorem{proposition}[theorem]{Proposition}
\theoremstyle{definition}
\newtheorem{definition}{Definition}
\newtheorem*{acknowledgement}{\textup{Acknowledgement}}
\numberwithin{equation}{section}
\newcommand{\aaa}{\alpha}
\newcommand{\bbb}{\beta}
\newcommand{\eps}{\varepsilon}
\newcommand{\ess}{\emptyset}
\newcommand{\oo}{\infty}
\newcommand{\sm}{\setminus}
\newcommand{\sse}{\subset}
\newcommand{\N}{\ensuremath{\mathbb N}} 
\newcommand{\R}{\ensuremath{\mathbb R}} 
\newcommand{\Z}{\ensuremath{\mathbb Z}} 
\DeclareMathOperator{\lip}{lip\kern-0.8pt}
\DeclareMathOperator{\Lip}{Lip\kern-0.8pt}
\newcommand*{\Union}{\bigcup}
\begin{document}

\title[Lipschitz one sets modulo sets of measure zero]
{Lipschitz one sets modulo sets of measure zero}
\author[Zolt\'an Buczolich, Bruce Hanson, Bal\'azs Maga \and G\'asp\'ar V\'ertesy]%
{Zolt\'an Buczolich*, Bruce Hanson**, Bal\'azs Maga*** \and G\'asp\'ar V\'ertesy****}

\newcommand{\acr}{\newline\indent}

\address{\llap{*\,}Department of Analysis\acr
                   ELTE E\"otv\"os Lor\'and University\acr
                   P\'azm\'any P\'eter S\'et\'any 1/c\acr
                   1117 Budapest\acr
                   HUNGARY\acr
                   \acr
                   ORCID Id: 0000-0001-5481-8797}
\urladdr{http://buczo.web.elte.hu}

\email{buczo@caesar.elte.hu}

\address{\llap{**\,}Department of Mathematics, Statistics and Computer Science\acr
                    St.\ Olaf College\acr
                    Northfield, Minnesota 55057\acr
                    USA}
\email{hansonb@stolaf.edu}

\address{\llap{***\,}Department of Analysis\acr
                   ELTE E\"otv\"os Lor\'and University\acr
                   P\'azm\'any P\'eter S\'et\'any 1/c\acr
                   1117 Budapest\acr
                   HUNGARY}
\email{magab@caesar.elte.hu}

\address{\llap{****\,}Department of Analysis\acr
                   ELTE E\"otv\"os Lor\'and University\acr
                   P\'azm\'any P\'eter S\'et\'any 1/c\acr
                   1117 Budapest\acr
                   HUNGARY}
\email{vertesy.gaspar@gmail.com}


\thanks{Zolt\'an Buczolich was supported by the Hungarian National Research, Development and Innovation Office--NKFIH, Grant 124003.}
\thanks{Bal\'azs Maga was initially supported by the ÚNKP-18-2 New National Excellence of the Hungarian Ministry of Human Capacities, later on by the ÚNKP-19-3 New National Excellence Program of the Ministry for Innovation and Technology, and during the entire period by the Hungarian National Research, Development and Innovation Office–NKFIH, Grant 124749.}
\thanks{G\'asp\'ar V\'ertesy was supported by the \'UNKP-18-3 New National Excellence Program of the Ministry of Human Capacities, and by the Hungarian National Research, Development and Innovation Office–NKFIH, Grant 124749.}

\subjclass[2010]{Primary : 26A16, Secondary : 28A05} 
\keywords{big and little lip functions, uniform density}

\begin{abstract}
We denote the local ``little" and ``big" Lipschitz functions of a function
$f: \R\to \R$ by $ \lip f$ and $ \Lip f$. In this paper we continue our research concerning the following question. Given a set $E {\subset}  \R$ is it possible to find a continuous function $f$ such that
$ \lip f=\mathbf{1}_E$ or $ \Lip f=\mathbf{1}_E$?

In giving some partial answers to this question uniform density type (UDT) and strong uniform density type (SUDT) sets play an important role. 

In this paper we show that modulo sets of zero Lebesgue measure any measurable set coincides with a $\Lip 1$ set. 

On the other hand, we prove that there exists a measurable SUDT set $E$  such that for any $G_\delta$ set $\widetilde{E}$ satisfying $|E\Delta\widetilde{E}|=0$ the set $\widetilde{E}$ does not have UDT. Combining these two results we obtain that there exists $\Lip 1$ sets not having UDT, that is, the converse of one of our earlier results does not hold.
\end{abstract}

\maketitle

\section{Introduction}\label{*secintro}
If $f:\R \to \R$ is continuous, then the so-called ``big Lip'' and ``little lip'' functions are defined as follows:

\begin{equation}
 \Lip
  f(x)=\limsup_{r\to 0^+}M_f(x,r),\qquad\label{Lipdef}
 \lip
  f(x)=\liminf_{r\rightarrow 0^+}M_f(x,r),
 \end{equation}
where
$$M_f(x,r)=\frac{\sup\{|f(x)-f(y)| \colon |x-y| \le r\}}r.$$

By the Rademacher-Stepanov Theorem  \cite{MaZa}
if $\Lip f(x)<\infty $ for Lebesgue almost every $x $, then $f$ is differentiable almost everywhere.  On the other hand, in \cite{BaloghCsornyei} Balogh and Cs\"{o}rnyei  showed that this property is not true if one replaces $\Lip f$ with  $\lip f$.
This line of research was continued in \cite{Hanson2} and  \cite{BHRZ}.

As other activity concerning $\lip $ exponents it is also worth mentioning the very recent result \cite{MaZi}.

The current paper is a continuation of \cite{[BHMVlip]}.   (At the time of acceptance of this paper \cite{[BHMVlip]} was not accepted/published hence our references to numbered Theorems/Lemmas etc. in \cite{[BHMVlip]} are to the first
arXiv preprint version of \cite{[BHMVlip]}.)

Following \cite{[BHMVlip]}, we say that $E\subset \R$ is $\Lip 1$ ($\lip 1$) if there exists a continuous function $f$ defined on $\R$ so that
$\Lip f=\mathbf{1}_E$ ($\lip f=\mathbf{1}_E$).  In \cite{[BHMVlip]} we considered the challenging problem of characterizing these sets, focusing primarily on the $\Lip 1$ case.  According to \cite[Theorem 4.1]{[BHMVlip]}, being a $G_\delta$ set is a necessary, but not sufficient condition for being a $\Lip 1$ set.  

Our sufficient conditions of sets being $\Lip 1$ rely on assumptions about uniform density properties of these sets. First we need to define the sets
$E^{\gamma,\delta}$, as we did in \cite[Definition 1.1]{[BHMVlip]}:
\begin{definition}\label{*defEgd}
Let $E\subseteq\mathbb{R}$ be measurable and $\gamma,\delta>0$.
Then
\begin{displaymath} E^{\gamma,\delta}=\left\{x\in\mathbb{R} \bigm| \forall r\in (0,\delta],\texttt{ }\max\left\{\frac{|(x-r,x)\cap E|}{r},\frac{|(x,x+r)\cap E|}{r}\right\}\geq\gamma \right\}.
\end{displaymath}
\end{definition}
(Note that we use $|A|$ to indicate the Lebesgue measure of a set $A$.)

In \cite[Definitions 1.1 and 5.3]{[BHMVlip]} the following density conditions were introduced:


\begin{definition}\label{*defudt}
We say that $E$ has uniform density type (UDT) if there exist sequences $\gamma_n\nearrow 1$ and $\delta_n\searrow 0$ such that $E\subseteq\bigcap_{k=1}^{\infty}\bigcup_{n=k}^{\infty}E^{\gamma_n,\delta_n}$.

On the other hand, $E$ has strong uniform density type (SUDT) if there exist sequences $\gamma_n\nearrow 1$ and $\delta_n\searrow 0$ such that $E\subseteq\bigcup_{k=1}^{\infty}\bigcap_{n=k}^{\infty}E^{\gamma_n,\delta_n}$.
\end{definition}

One of the main results from \cite[Theorem 5.5]{[BHMVlip]}, states that if a set $E$ is  $G_\delta$ and UDT, then $E$ is $\Lip 1$.

In the present paper we show that every measurable subset of $\R$ is ``close'' to being a $\Lip 1$ set.  More precisely, we prove

\begin{theorem}\label{*EEtappr}
For every measurable set $E$ there exists a $G_{\delta}$, $\Lip 1$ set $\widetilde{E}$ such that $|E\triangle \widetilde{E}| = 0$.
\end{theorem}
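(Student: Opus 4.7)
My plan is to appeal to \cite[Theorem 5.5]{[BHMVlip]}: it suffices to produce a $G_\delta$ set $\widetilde{E}$ with UDT and $|E\triangle\widetilde{E}|=0$, for such a set is automatically $\Lip 1$. A standard gluing argument reduces to the case when $E$ is bounded.

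Let $E_0$ denote the set of Lebesgue density points of $E$, so $|E\triangle E_0|=0$, and set $\gamma_n:=1-1/n$. For each $n$ the sets $E^{\gamma_n,\delta}$ increase as $\delta\searrow 0$, and their union contains $E_0$; hence I may pick $\delta_n\searrow 0$ with $|E_0\setminus E^{\gamma_n,\delta_n}|<2^{-n}$. By Borel--Cantelli, the set $L:=\limsup_n E^{\gamma_n,\delta_n}$ then satisfies $|E_0\setminus L|=0$. Combined with the observation that almost every point of $\R\setminus E$ is a density-zero point of $E$ and hence lies outside $E^{\gamma_n,\delta_n}$ for all large $n$, this yields $|L\triangle E|=0$.

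The main technical step is to upgrade $L$, which is a priori only $F_{\sigma\delta}$, to a $G_\delta$ set preserving UDT. For each $n$ I would construct an open set $O_n$ with
\[
E^{\gamma_n,\delta_n}\;\subseteq\;O_n\;\subseteq\;E^{\tilde{\gamma}_n,\tilde{\delta}_n}
\]
for auxiliary sequences $\tilde{\gamma}_n\nearrow 1$ and $\tilde{\delta}_n\searrow 0$, with $|O_n\setminus E^{\gamma_n,\delta_n}|<2^{-n}$. Setting $\widetilde{E}:=\bigcap_{k\geq 1}\bigcup_{n\geq k}O_n$ produces a $G_\delta$ set; an outer Borel--Cantelli on $O_n\setminus E^{\gamma_n,\delta_n}$ gives $|\widetilde{E}\triangle L|=0$, hence $|\widetilde{E}\triangle E|=0$; and the inclusion $\widetilde{E}\subseteq\limsup_n E^{\tilde{\gamma}_n,\tilde{\delta}_n}$ exhibits UDT for $\widetilde{E}$ with respect to $(\tilde{\gamma}_n,\tilde{\delta}_n)$, using that $\widetilde{E}^{\tilde{\gamma}_n,\tilde{\delta}_n}=E^{\tilde{\gamma}_n,\tilde{\delta}_n}$ since $|\widetilde{E}\triangle E|=0$.

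The main obstacle is the sandwich construction. A case analysis on which of the one-sided densities of $E$ at $x\in E^{\gamma_n,\delta_n}$ witnesses $\gamma_n$ at the nearby scales $r\pm\eta$ shows that for $|y-x|\leq\eta$ and $r\in(2\eta,\delta_n-\eta]$, the larger of the two one-sided densities of $E$ at $y$ at scale $r$ is at least $\gamma_n-2\eta/r$. Choosing $\eta\ll\delta_n$ then gives an open neighborhood of $E^{\gamma_n,\delta_n}$ on which the uniform density condition survives with slightly relaxed parameters, but only for scales $r$ bounded below by $2\eta$. The genuinely delicate part is controlling the smaller scales $r\leq 2\eta$, where the pointwise comparison between $x$ and $y$ breaks down: the map $x\mapsto|(x-r,x)\cap E|$ is only Lipschitz with constant $2$, so the relative error at scale $r$ is $O(1/r)$, which blows up. This must be handled by interlocking the conditions at successive indices, exploiting the fact that the small $r$-scales missed by the $n$-th comparison fit naturally into the $\delta_m$-range of some later index $m>n$, so that $\widetilde{E}$ inherits the density control for all sufficiently small $r$ from the combined action of the full sequence of open sets $\{O_m\}_{m\geq n}$.
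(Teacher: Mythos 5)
Your overall strategy---to produce a $G_\delta$ set $\widetilde{E}$ with UDT and $|E\triangle\widetilde{E}|=0$ and then invoke \cite[Theorem 5.5]{[BHMVlip]}---is exactly the approach the paper explicitly rules out. Theorem \ref{*thSUDTnonUDT} exhibits an $F_\sigma$ set $E$ with SUDT such that \emph{no} $G_\delta$ set $\widetilde{E}$ with $|E\triangle\widetilde{E}|=0$ has UDT; the remark immediately preceding the proof of Theorem \ref{*EEtappr} notes that the shortcut you propose ``is not always possible \ldots even if $E$ has nice density behaviour.'' So if your argument went through it would contradict a theorem proved in the same paper.

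The place where your construction concretely breaks is the sandwich step: you want a nonempty open set $O_n$ with $E^{\gamma_n,\delta_n}\subseteq O_n\subseteq E^{\tilde\gamma_n,\tilde\delta_n}$. Take $E$ to be a fat Cantor set as in Theorem \ref{fat cantor density}. Every point in a complementary gap has both one-sided densities equal to $0$ at small scales, so $E^{\gamma,\delta}\subseteq E$ for all $\gamma,\delta>0$; in particular every $E^{\tilde\gamma_n,\tilde\delta_n}$ is nowhere dense and has empty interior. Hence the only open $O_n\subseteq E^{\tilde\gamma_n,\tilde\delta_n}$ is $O_n=\emptyset$, which is impossible once $|E^{\gamma_n,\delta_n}|>0$ (which holds for large $n$ by your choice $|E_0\setminus E^{\gamma_n,\delta_n}|<2^{-n}$). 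This is not a technicality about small scales $r\le 2\eta$ that a cleverer interlocking could repair; it is a topological obstruction. The paper's actual proof bypasses UDT entirely and constructs the Lipschitz function $f$ directly: it builds a nested sequence of open sets $G_n$ (so $\widetilde{E}=\bigcap_n G_n$ is $G_\delta$ by design), approximating functions $f_n$ with $\Lip f_n\le 1$, and envelope functions $\mathcal{E}_n\le f_n\le\mathcal{E}^n$ which are flat on $\R\setminus G_n$, using Lemma \ref{suru} to arrange good one-sided density behaviour at the endpoints of the components of each $G_n$ and a delicate inductive scheme (conditions (A)--(G)) to force $\Lip f=\mathbf{1}_{\widetilde{E}}$. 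This is substantially more work, but it is necessary precisely because the UDT route you chose is closed.
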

In measure theory such theorems are often not too difficult, but in our case the proof of this theorem is not that easy. 

 On the other hand, we also prove the following:

\begin{theorem}\label{*thSUDTnonUDT}
There exists an $F_\sigma$ set $E\subseteq{\mathbb{R}}$ having SUDT such that for any $G_\delta$ set $\widetilde{E}$ satisfying $|E\Delta\widetilde{E}|=0$ the set $\widetilde{E}$ does not have UDT.
\end{theorem}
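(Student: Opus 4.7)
The plan is to construct an $F_\sigma$ set $E \subseteq [0,1]$ that is SUDT and measure-dense in $[0,1]$ while being topologically meager, and then use a Baire category argument: every $G_\delta$ modification $\widetilde{E}$ of $E$ must meet a comeager set $B$ of points where the one-sided densities of $E$ fail to tend to $1$, yielding a UDT-violating point in $\widetilde{E}$.

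\textbf{Construction.} Pick a family of pairwise-disjoint closed intervals $(I_n)_{n\ge 1}$ in $[0,1]$ with $\sum_n |I_n| < 1$, arranged (for instance inside the gaps of an auxiliary fat Cantor set all of whose points are density-$1$ points) so that every open $J \subseteq [0,1]$ contains some $I_n$ and so that $\bigcup_n I_n$ has upper density strictly less than $1$ at every $x \in [0,1] \setminus \bigcup_n I_n$. Inside each $I_n$ build a fat Cantor set $C_n \subseteq I_n$ of positive measure using a Cantor-type construction in which the successive removed fractions $\{\delta_k^{(n)}\}_{k\ge 1}$ are summable, which guarantees that for every $x \in C_n$ the quantity
\[
\phi_{C_n}(x,r) := \max\!\left\{\frac{|(x-r,x)\cap C_n|}{r},\,\frac{|(x,x+r)\cap C_n|}{r}\right\}
\]
tends to $1$ as $r \to 0^+$. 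Define $E := \bigcup_n C_n$.

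\textbf{Properties.} $E$ is $F_\sigma$, dense in $[0,1]$, with $|E \cap J| > 0$ for every open $J \subseteq [0,1]$ (since such a $J$ contains some $I_n$, hence $C_n$). Each $C_n$ is closed nowhere-dense, so $E$ is meager in $[0,1]$. For $x \in E$, say $x \in C_n$, we have $\phi_E(x,r) \ge \phi_{C_n}(x,r) \to 1$ (defining $\phi_E$ analogously), so $\liminf_{r\to 0^+}\phi_E(x,r) = 1$; a diagonalization over the countable family $\{C_n\}$ then produces sequences $\gamma_k \nearrow 1$, $\delta_k \searrow 0$ realizing the SUDT condition for $E$. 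For $x \notin E$, the controlled placement of the $I_n$'s forces the one-sided densities of $E$ at $x$ to remain bounded away from $1$ at arbitrarily small scales, giving $\liminf_{r \to 0^+}\phi_E(x,r) < 1$. Hence the set $B := \{x \in [0,1] : \liminf_{r\to 0^+}\phi_E(x,r) < 1\}$ contains $[0,1]\setminus E$ modulo a meager set and is therefore comeager in $[0,1]$.

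\textbf{Baire category argument.} Let $\widetilde{E}$ be any $G_\delta$ set with $|E \triangle \widetilde{E}| = 0$. By measure closeness, $|\widetilde{E} \cap J| = |E \cap J| > 0$ for every open $J \subseteq [0,1]$, so $\widetilde{E}$ is dense in $[0,1]$; hence $\widetilde{E} \cap [0,1]$ is a dense $G_\delta$, which is comeager. The intersection of two comeager sets in $[0,1]$ is nonempty, so we may pick $x \in \widetilde{E} \cap B$. Since $\widetilde{E}$ and $E$ agree a.e., $\phi_{\widetilde{E}}(x,r) = \phi_E(x,r)$, so there exists $\gamma_0 < 1$ with the property that for every $\delta > 0$ some $r \in (0,\delta]$ satisfies $\phi_{\widetilde{E}}(x,r) < \gamma_0$. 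For any candidate sequences $\gamma_n \nearrow 1$, $\delta_n \searrow 0$, once $\gamma_n > \gamma_0$ we have $x \notin \widetilde{E}^{\gamma_n,\delta_n}$, so $x$ lies in $\widetilde{E}^{\gamma_n,\delta_n}$ for only finitely many $n$. Therefore $\widetilde{E}$ does not have UDT.

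\textbf{Main obstacle.} The delicate ingredients are (a) constructing each $C_n$ so that \emph{every} point (not just almost every) satisfies $\phi_{C_n}(x,r) \to 1$, arranged by the summable-removed-fractions Cantor construction; (b) placing the $I_n$'s so that at every $x \notin \bigcup_n I_n$ the upper density of $\bigcup_n I_n$ stays below $1$, which is handled by positioning the $I_n$'s inside the gaps of a fat Cantor set that is itself a density-$1$ set at each of its points; and (c) extracting uniform SUDT sequences $(\gamma_k),(\delta_k)$ by a diagonal argument over the countable family $\{C_n\}$. Once these ingredients are in place, the Baire-category step is a clean intersection-of-comeager-sets argument.
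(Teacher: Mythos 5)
Your Baire-category reduction is essentially the same as the paper's: show that $\widetilde{E}$ is comeager, show that a comeager set of points has $\liminf_{r\to 0^+}\phi_E(x,r)<1$, and pick a point in the intersection to kill any candidate UDT sequences. (The paper phrases it as ``some $E^{\gamma_i,\delta_i}$ is closed and contains an interval, so $E$ has full measure there, contradicting weak nowhere-density of $E$,'' which after unwinding quantifiers is the statement that your set $B$ is comeager.) However, the construction you propose does not actually produce a set with the properties your argument requires.

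The central difficulty is that you want $E$ to have positive measure in \emph{every} nonempty open subinterval of $[0,1]$ (this is what forces $\widetilde{E}$ to be topologically dense, hence comeager), while $E$ itself is meager. Your construction places pairwise disjoint closed intervals $I_n$ and puts a nowhere dense Cantor set $C_n$ inside each $I_n$; but then any gap $(\alpha,\beta)$ of $C_n$ interior to $I_n$ contains no $I_m$ at all (disjointness), so $|E\cap(\alpha,\beta)|=0$, and in fact the stated requirement ``every open $J\subseteq[0,1]$ contains some $I_n$'' is self-contradictory for disjoint $I_n$. The paper avoids this by a \emph{recursive} placement: put $E_1$ anywhere, put $E_2$ inside a complementary interval of $E_1$, put $E_3$ inside a complementary interval of $E_1\cup E_2$, and enumerate so that every complementary interval eventually receives a fat Cantor set; that is what makes $E$ measure-dense without making it topologically large. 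Two further soft spots: (a) a ``fat Cantor set all of whose points are density-$1$ points'' cannot exist (endpoints of its complementary intervals have one-sided density $0$ — indeed the paper's Theorem~\ref{*th010} shows the one-sided density points of any weakly nowhere dense set form a meager subset); and (b) your justification that $B$ is comeager only covers $x\notin\bigcup_n I_n$, but that set is itself meager (the complement of the dense open $\bigcup_n\operatorname{int}I_n$ is nowhere dense), so it does not deliver a comeager $B$. To get $B$ comeager one needs, as the paper does, that each $E^{\gamma,\delta}$ is a closed set containing no interval (via the Lebesgue density theorem plus weak nowhere-density of $E$), hence nowhere dense.
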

 Combining these two theorems yields $\Lip 1$ sets which fail to be UDT so the converse of \cite[Theorem 5.5]{[BHMVlip]} is false.

The layout of this paper is as follows:  In Section \ref{*secprel} we introduce our notation and recall some of the results from \cite{[BHMVlip]}.  In Section \ref{secsubsuper}, we introduce a class of Cantor sets which have SUDT and use them to construct the set $E$ given in Theorem \ref{*thSUDTnonUDT}.   Finally, Section \ref{*seclip15} is devoted to the proof of Theorem \ref{*EEtappr}.

\section{Notation and preliminaries}\label{*secprel}

We recall several definitions and results, from \cite{[BHMVlip]}.

\begin{definition}\label{l converge}
We write $I_n \stackrel{l}{\to} x$ (resp.~$I_n \stackrel{r} \to x$) if $(I_n)$ is a sequence of closed intervals with $I_n=[x-r_n,x]$ (resp.~$I_n=[x,x+r_n]$) and $r_n \searrow 0$.
\end{definition}

\begin{definition}\label{one side dense}
The set $E$ is  \emph{right (left) dense} at $x$ if for any sequence $(I_n)$ such that
$I_n \stackrel{r}{\to} x$ ($I_n \stackrel{l}{\to}x$) we have $\frac{|E \cap I_n|}{|I_n|}\to 1$. In this case, we say that $x$ is a \emph{right density point} (\emph{left density point}) of $E$.
The set $E$ is \emph{one-sided dense} if $E$ is either right or left dense at every point $x \in E$.
\end{definition}


From Definition \ref{*defEgd} it is straightforward to check the following lemma from \cite[Lemma 5.1]{[BHMVlip]}:

\begin{lemma}\label{*lemegdcloa} For any $\gamma,\delta>0$ the set $E^{\gamma,\delta}$ is closed. \end{lemma}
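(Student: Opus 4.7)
The plan is to show that the complement of $E^{\gamma,\delta}$ is open, or equivalently, that $E^{\gamma,\delta}$ is sequentially closed. I will take the sequential approach since it keeps the bookkeeping very light. Suppose $(x_n) \subset E^{\gamma,\delta}$ with $x_n \to x$. My goal is to verify, for every $r \in (0,\delta]$, the single inequality
\[
\max\!\left\{\frac{|(x-r,x)\cap E|}{r},\;\frac{|(x,x+r)\cap E|}{r}\right\}\ge \gamma,
\]
and then conclude $x\in E^{\gamma,\delta}$.

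The key observation I would use is that for any fixed $r>0$ the maps $y\mapsto |(y-r,y)\cap E|$ and $y\mapsto |(y,y+r)\cap E|$ are $1$-Lipschitz in $y$. This is essentially immediate from the estimate $|(y-r,y)\triangle(y'-r,y')|\le 2|y-y'|$ applied to $\int \mathbf{1}_E$, so I would state it as a one-line observation rather than prove it in detail. It follows that for each fixed $r$, the function
\[
F_r(y) := \max\!\left\{\frac{|(y-r,y)\cap E|}{r},\;\frac{|(y,y+r)\cap E|}{r}\right\}
\]
is continuous in $y$ (max of two continuous functions).

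Now the lemma falls out: fix any $r \in (0,\delta]$. Because each $x_n \in E^{\gamma,\delta}$, we have $F_r(x_n)\ge \gamma$ for every $n$, and by continuity $F_r(x)=\lim_n F_r(x_n)\ge \gamma$. Since $r\in(0,\delta]$ was arbitrary, this shows $x\in E^{\gamma,\delta}$, completing the proof.

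I do not anticipate a genuine obstacle here; the only point that needs the slightest care is that the condition in the definition of $E^{\gamma,\delta}$ is a universal quantifier over $r$, so one must apply the continuity argument \emph{separately for each $r$} rather than trying to pass to a limit in $r$ simultaneously. Once the proof is organized this way, each step is a one-liner.
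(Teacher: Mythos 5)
Your proof is correct. Note that the paper itself does not supply a proof here; it merely records the lemma as Lemma~5.1 of the cited earlier work and remarks that it is ``straightforward to check'' from Definition~\ref{*defEgd}, so there is no in-paper argument to compare against. Your continuity argument is the natural way to carry out that check: for each fixed $r$, the function $F_r$ is continuous, so $\{y : F_r(y)\ge\gamma\}$ is closed, and $E^{\gamma,\delta}=\bigcap_{r\in(0,\delta]}\{y:F_r(y)\ge\gamma\}$ is an intersection of closed sets. One tiny imprecision: the bound $|(y-r,y)\triangle(y'-r,y')|\le 2|y-y'|$ that you invoke gives $2$-Lipschitz, not $1$-Lipschitz, for $y\mapsto|(y-r,y)\cap E|$ (the sharper $1$-Lipschitz constant follows instead from $|A\cap E|-|B\cap E|=|(A\setminus B)\cap E|-|(B\setminus A)\cap E|$ with $|A\setminus B|=|B\setminus A|=|y-y'|$ when $|y-y'|<r$). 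This does not affect the argument, since any Lipschitz constant yields the continuity you need.
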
 \label{closed}

The following proposition and theorem were also proved in \cite[Proposition 5.4 and Theorem 5.5]{[BHMVlip]}:

\begin{proposition} \label{*prop06}
Let $E, E_1, E_2, ...$ be measurable subsets of $\mathbb{R}$.
\begin{enumerate}
\item If a set $E$ has SUDT then it also has UDT.
\item Any interval has SUDT (and hence UDT).
\item If $E_1, E_2, ...$ have UDT (resp.
SUDT) then $E=\bigcup_{n=1}^{\infty}E_n$ also has UDT (resp.
SUDT).
\item There exists $E$ which has SUDT but its closure $\overline{E}$ does not have UDT.
\end{enumerate}
\end{proposition}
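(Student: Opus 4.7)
The plan is to verify the four assertions in turn. Parts (i)--(iii) are general manipulations of the auxiliary sets $E^{\gamma,\delta}$, while (iv) is witnessed by a concrete example.

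Part (i) is immediate from the elementary set-theoretic inclusion $\bigcup_k\bigcap_{n\ge k}A_n\subseteq\bigcap_k\bigcup_{n\ge k}A_n$ applied to $A_n=E^{\gamma_n,\delta_n}$: the same sequences witnessing SUDT also witness UDT. For (ii), for any interval $I$ and any $x\in I$ there is $\ell(x)>0$ such that for every $r\in(0,\ell(x)]$ at least one of $(x-r,x)$ and $(x,x+r)$ lies entirely in $I$; hence the max in Definition \ref{*defEgd} equals $1$, so $x\in I^{1,\ell(x)}\subseteq I^{\gamma,\delta}$ for any $\gamma\le 1$ and $\delta\le\ell(x)$. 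Taking any $\gamma_n\nearrow 1$, $\delta_n\searrow 0$, every $x\in I$ lies in $I^{\gamma_n,\delta_n}$ for all $n$ sufficiently large, which is SUDT.

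For (iii), the key is the monotonicity $E_n^{\gamma,\delta}\subseteq E^{\gamma,\delta}$ whenever $E_n\subseteq E$: enlarging the underlying set only increases the one-sided densities in the definition. Given sequences $(\gamma_k^{(n)},\delta_k^{(n)})$ for each $E_n$, I would produce a universal pair $(\gamma_m,\delta_m)$ by diagonalization: set $\gamma_m=1-1/m$ and choose $\delta_m$ small enough that for every $n\le m$ there is an index $k=k_n(m)$ with $\gamma_k^{(n)}\ge\gamma_m$ and $\delta_k^{(n)}\ge\delta_m$; concretely, one can take $\delta_m\le\min\bigl(\{1/m\}\cup\{\delta_{k_n(m)}^{(n)}:n\le m\}\bigr)$ with $k_n(m)=\min\{k:\gamma_k^{(n)}\ge\gamma_m\}$ and monotonize afterward. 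Monotonicity in $\gamma,\delta$ then gives $E_n^{\gamma_{k_n(m)}^{(n)},\delta_{k_n(m)}^{(n)}}\subseteq E^{\gamma_m,\delta_m}$. In the SUDT case, $k_n(m)\to\infty$ with $m$, so for $m$ large it lies in the SUDT tail for any given $x\in E_n$. In the UDT case, each of the infinitely many $x$-good indices $k$ in the $n$-th sequence yields an $m$ (the integers in $(1/(1-\gamma_{k-1}^{(n)}),1/(1-\gamma_k^{(n)})]$) for which $k_n(m)=k$, producing infinitely many $m$ with $x\in E^{\gamma_m,\delta_m}$.

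Finally, for (iv), I would take $E=\bigcup_{n\ge 1}(1/n,\,1/n+1/n^3)$, a disjoint union of short intervals accumulating at $0$. Parts (ii) and (iii) immediately give that $E$ has SUDT. Its closure contains $0$, but for small $r>0$ the left density $|\overline{E}\cap(-r,0)|/r$ vanishes while the right density satisfies $|\overline{E}\cap(0,r)|/r\le r^{-1}\sum_{n\ge 1/r}n^{-3}=O(r)\to 0$. Hence the max in Definition \ref{*defEgd} at $0$ tends to $0$, so $0\notin\overline{E}^{\gamma,\delta}$ for any $\gamma,\delta>0$, and $\overline{E}$ therefore fails UDT. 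The chief technical burden I anticipate is the diagonalization in (iii), particularly in the UDT case where one must ensure that the integers $m$ arising from $x$-good indices $k$ remain cofinal in $\mathbb{N}$ (which may require thinning each original sequence so that the ranges above contain integers); the other parts reduce to routine density estimates.
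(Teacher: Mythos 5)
Parts (i), (ii), and (iv) are correct and well argued: (i) is the liminf~$\subseteq$~limsup inclusion; (ii) is a direct check at interior points and endpoints of a non-degenerate interval; for (iv) the set $\bigcup_n(1/n,1/n+n^{-3})$ has SUDT by (ii) and (iii), while its closure contains $0$, where both one-sided densities of $\overline{E}$ tend to $0$, so $0\notin\overline{E}^{\gamma,\delta}$ for any $\gamma,\delta>0$ and UDT fails. The SUDT half of your diagonalization in (iii) also works, because once $m$ is large enough that $k_n(m)$ passes the threshold beyond which $x\in E_n^{\gamma_j^{(n)},\delta_j^{(n)}}$ for all $j$, the inclusion $E_n^{\gamma_{k_n(m)}^{(n)},\delta_{k_n(m)}^{(n)}}\subseteq E_n^{\gamma_m,\delta_m}\subseteq E^{\gamma_m,\delta_m}$ gives $x\in E^{\gamma_m,\delta_m}$ for every large $m$. (The paper only states the proposition and leaves it as an exercise, so there is no proof of record to compare against.)

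The UDT half of (iii), however, has a genuine gap that your sketch does not close. With $\delta_m\le\delta_{k_n(m)}^{(n)}$ you get $E_n^{\gamma_j^{(n)},\delta_j^{(n)}}\subseteq E^{\gamma_m,\delta_m}$ only for $j=k_n(m)$: if $j>k_n(m)$ then $\gamma_j^{(n)}\ge\gamma_m$ still holds, but $\delta_j^{(n)}<\delta_{k_n(m)}^{(n)}$ while $\delta_m$ is only bounded \emph{above} by $\delta_{k_n(m)}^{(n)}$, so the needed $\delta_m\le\delta_j^{(n)}$ is not guaranteed. The map $m\mapsto k_n(m)$ can skip arbitrarily long blocks, and the infinite set of $x$-good indices for a given $x\in E_n$ may lie entirely in the skipped part. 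Your proposed remedy --- thinning the witnessing sequence so every gap $\bigl(1/(1-\gamma_{k-1}^{(n)}),1/(1-\gamma_k^{(n)})\bigr]$ meets $\ZZ$ --- does not work as stated: passing to a subsequence can discard all but finitely many $x$-good indices, since UDT provides an infinite set rather than a tail and so is not subsequence-stable the way SUDT is. Two repairs do close the gap. One is to shrink $\delta_m$ further, say $\delta_m\le\min\{\delta_{k_n(m+1)-1}^{(n)}:n\le m\}$, which gives $E_n^{\gamma_j^{(n)},\delta_j^{(n)}}\subseteq E^{\gamma_m,\delta_m}$ for \emph{every} $j$ in the block $[k_n(m),k_n(m+1))$; these blocks cover a tail of $\NN$, so the largest $m$ with $k_n(m)\le j$ tends to infinity over the $x$-good $j$'s and yields cofinally many good $m$. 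The other is to merge rather than thin: whenever a gap contains no integer, replace the consecutive pairs $(\gamma_k^{(n)},\delta_k^{(n)})$ and $(\gamma_{k+1}^{(n)},\delta_{k+1}^{(n)})$ by the single pair $(\gamma_k^{(n)},\delta_{k+1}^{(n)})$, which by the monotonicity of $E^{\gamma,\delta}$ in both parameters dominates both deleted sets, so UDT membership is preserved; iterating produces a witnessing sequence whose gaps all meet $\ZZ$.
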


\begin{theorem}
\label{*thUDTLip1}
 Assume that $E$ is $G_\delta$ and $E$ has UDT. Then there exists a continuous function $f$ satisfying $\Lip f =\mathbf{1}_E$, that is the set $E$ is $\Lip 1$.
\end{theorem}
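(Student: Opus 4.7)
The plan is to construct $f$ as a uniformly convergent series $f = \sum_n f_n$ of continuous piecewise-linear sawtooth functions adapted to both the $G_\delta$ presentation of $E$ and the scales provided by UDT. First I would write $E = \bigcap_n U_n$ with $U_n$ open and decreasing, and extract from UDT sequences $\gamma_n \nearrow 1$, $\delta_n \searrow 0$ satisfying $E \subset \bigcap_k \bigcup_{n \geq k} E^{\gamma_n, \delta_n}$. By passing to a common subsequence, I would arrange that $U_n$ is a thin open neighborhood of $E^{\gamma_n, \delta_n}$ of thickness much less than $\delta_n$, and (where possible) that $\overline{U_{n+1}} \subset U_n$, so that every $x$ outside $\overline{U_N}$ has a neighborhood disjoint from $U_N$.

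For each $n$, on every connected component $J$ of $U_n$ I would define $f_n|_J$ as a piecewise-linear sawtooth with slopes $\pm 1$, vanishing at the endpoints of $J$, with period $p_n$ much smaller than $\delta_n$ and amplitude $a_n$ satisfying $a_n = o(\delta_n)$ and $\sum_n a_n < \infty$; outside $U_n$ I set $f_n \equiv 0$. Setting $f = \sum_n f_n$ then yields a continuous function by uniform convergence.

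To verify $\Lip f(x) \geq 1$ for $x \in E$, pick $n_j \to \infty$ with $x \in E^{\gamma_{n_j}, \delta_{n_j}}$. Definition \ref{*defEgd} provides for each $j$ a scale $r_j \leq \delta_{n_j}$ on which $E$, and hence $U_{n_j}$, has density at least $\gamma_{n_j}$ on one side of $x$; the sawtooth $f_{n_j}$ oscillates on this interval by at least $\gamma_{n_j} r_j - O(a_{n_j})$, while other $f_m$ contribute only small errors (tiny amplitudes for large $m$, essentially linear single-slope behaviour for small $m$ via coordinated signs). Dividing by $r_j$ and letting $j \to \infty$ gives $\Lip f(x) \geq 1$. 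A matching upper bound $\Lip f(x) \leq 1$ must be engineered into the construction by a careful choice of slope signs and periods so that the partial sums $\sum_{n \leq N} f_n$ remain $1$-Lipschitz.

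The main obstacle is verifying $\Lip f(x) = 0$ for every $x \notin E$, which splits into two sub-cases. If $x \notin \overline{E}$, then eventually $f_n \equiv 0$ in a fixed neighborhood of $x$; but the finitely many earlier $f_n$ must be shown to be flat (rather than $\pm 1$-sloped) near $x$, which requires each $f_n$ to have its zigzag region concentrated strictly inside a subneighborhood of $\overline{U_{n+1}}$ and to be flat on the transition shell $U_n \setminus \overline{U_{n+1}}$. If instead $x \in \overline{E} \setminus E$, every scale around $x$ contains sawtooth contributions, and one must exploit the amplitude decay $a_n = o(\delta_n)$: at any scale $r$ comparable to $\delta_n$, the oscillation of $f_n$ on $[x-r, x+r]$ is $O(a_n) = o(r)$, and summing over $n$ with matched scales yields total oscillation $o(r)$. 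Coordinating the sawtooth geometry so as to simultaneously meet the UDT density demand at points of $E$, the flatness demand on the transition shells, and the amplitude-decay demand off $E$ is the technical heart of the proof.
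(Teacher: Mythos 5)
Your overall architecture --- an additive series $f=\sum_n f_n$ of amplitude-$a_n$, slope-$\pm1$ sawtooths, each supported on a whole member $U_n$ of a $G_\delta$ presentation of $E$ --- has a structural flaw that ``coordinating signs and periods'' cannot repair. At a point $x\in E$ every level is active, so almost everywhere near $x$ the slope of the partial sum $\sum_{n\le N}f_n$ is a sum of $N$ terms each equal to $\pm1$; keeping this in $[-1,1]$ forces every level to be locally constant wherever any other level is non-constant, which is impossible when each $f_n$ is a genuine sawtooth on all of $U_n\supset E$. Relatedly, your lower-bound computation is quantitatively wrong: the oscillation of a sawtooth of amplitude $a_n$ on \emph{any} interval is at most $2a_n$, so $M_{f_{n_j}}(x,r_j)\le 2a_{n_j}/r_j\to 0$ when $r_j\asymp\delta_{n_j}\gg a_{n_j}$; a sign-alternating profile cannot convert the UDT density $\gamma_{n_j}$ at scale $r_j$ into a displacement $\approx\gamma_{n_j}r_j$ --- for that the slope must keep a single sign across the whole witness interval. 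Finally, for $x\in\overline{E}\setminus E$ (the hard case, since a $G_\delta$ set with UDT is typically dense in its closure) your amplitude-decay bound gives $\operatorname{osc}\bigl(f,[x-r,x+r]\bigr)\le 2rN(r)+\sum_{m>N(r)}a_m$, and after dividing by $r$ neither term tends to $0$; amplitude decay alone does not yield $\Lip f(x)=0$ there.

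The construction that works (Theorem 5.5 of \cite{[BHMVlip]}, reproduced in more elaborate form in the proof of Theorem \ref{*EEtappr} above) is not additive: one builds successive \emph{modifications} $f_n$ that agree with $f_{n-1}$ off $G_n$ and on a locally finite mesh $D_{n-1}$, have all slopes of absolute value at most $1$ (condition \eqref{meredek}), and realize slope at least $1-\frac1n$ between suitable mesh points at distance at most $\frac1n$ (condition \eqref{surun_meredek}); freezing the mesh values for all later stages (condition \eqref{fix_on_D_n}) is what makes these near-unit slopes survive to the limit and give $\Lip f\ge1$ on $E$, while the UDT hypothesis is used exactly to thread a monotone, single-signed ramp of slope $1-\frac1n$ through the $E$-heavy part of each witness interval. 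The vanishing of $\Lip f$ off $E$ is not obtained by summing oscillations but by trapping all later $f_m$ between two envelopes $\mathcal{E}_n\le f_m\le\mathcal{E}^n$ that touch $f$ and have derivative $0$ at every point of $\R\setminus G_n$ (conditions \eqref{E_n flat} and \eqref{boritek}). These three devices --- non-additive updates anchored on a frozen mesh, single-signed ramps extracting the density, and the envelope pair --- are precisely what is missing from your sketch.
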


The proofs of (i) and (ii) in the proposition are quite elementary, while the other two parts  are nice exercises and we encourage the reader to consider them as such. However, the proof of the theorem is quite elaborate as one of the main results of \cite{[BHMVlip]}.

\section{An SUDT set which is not approximable by a $G_\delta$ UDT set}
\label{secsubsuper}

\begin{definition}\label{*notcantsim}
Suppose that $(\alpha_n)$ satisfies $0 < \alpha_n < 1$ for all $n \in  \mathbb{N}
 $ and $E$ is a Cantor set constructed by starting with $[0,1]$ and then removing the open interval of length $\alpha_1$ centered at 1/2 from $[0,1]$.
Then continuing with a standard ``middle interval'' construction after the $n$th step there will be $2^n$ closed intervals remaining, each of the same length.
If $I$ is one of these intervals at the next stage of the construction we remove from $I$ an open interval centered at the midpoint of $I$ and of length $\alpha_{n+1} |I|$.
We let
$\mathcal{I}_n$ be the collection of closed intervals remaining after the $n$th step of the construction. For arbitrary $n$, the length $d_n$ of each of these intervals can be obtained recursively by letting  $d_0=1$ and $2d_{n}=(1-\alpha_n)d_{n-1}$.
Finally, we define $E=\bigcap_{n\in\mathbb{N}} E_n$ where $E_n = \bigcup_{I \in \mathcal{I}_n}I$.
In this case we use the notation $E \sim (\alpha_n)$.
\end{definition}

\begin{theorem}\label{fat cantor density}
Using  Definition \ref{*notcantsim} suppose that
 $E \sim (\alpha_n)$ where $\sum \alpha_n < \infty$.
Then $E$ is a nowhere dense closed set, which has SUDT.
\end{theorem}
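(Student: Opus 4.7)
The plan is to dispatch closedness and nowhere density immediately, then to reduce SUDT to a single one-sided density estimate obtained by pigeonholing at the correct scale. Closedness is automatic from $E=\bigcap_n E_n$ as an intersection of finite unions of closed intervals. For nowhere density, each $I\in\mathcal{I}_n$ has length $d_n = 2^{-n}\prod_{k=1}^n(1-\alpha_k)$, which tends to $0$; hence no nondegenerate open interval can sit inside all $E_n$, so $E$ has empty interior, and being closed it is nowhere dense.

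For SUDT I would first record the self-similar identity
\[
|E\cap I| = \beta_m\,d_m \qquad \text{for every }I\in\mathcal{I}_m,\ \text{where }\beta_m := \prod_{k>m}(1-\alpha_k),
\]
which holds because the restriction of the construction to $I$ is a rescaled copy driven by the tail $(\alpha_{m+k})_{k\geq 1}$. Since $\sum\alpha_k<\infty$ one has $\beta_m\to 1$ and $\alpha_m\to 0$, and in particular $|I\setminus E|=(1-\beta_m)d_m$ is negligible compared with $d_m$. Now, for $x\in E$ and $r\in(0,d_0/2]$, I would pick the unique $m=m(r)$ with $d_m/2<r\leq d_{m-1}/2$ and let $I_{m-1}(x)=[a,b]\in\mathcal{I}_{m-1}$ be the interval containing $x$. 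Since $(x-a)+(b-x)=d_{m-1}\geq 2r$, at least one of $x-a,\,b-x$ is $\geq r$, so at least one of the half-intervals $[x-r,x]$ or $[x,x+r]$, call it $J$, lies entirely inside $I_{m-1}(x)$. The identity above then yields $|E\cap J|\geq r-(1-\beta_{m-1})d_{m-1}$, whence
\[
\frac{|E\cap J|}{r} \;\geq\; 1-(1-\beta_{m-1})\frac{d_{m-1}}{r} \;\geq\; 1-\frac{4(1-\beta_{m-1})}{1-\alpha_m},
\]
using $d_{m-1}/d_m=2/(1-\alpha_m)$ and $r>d_m/2$. Setting $\delta_n:=d_{n-1}/2\searrow 0$ and $\gamma_n:=\inf_{m\geq n}\bigl(1-4(1-\beta_{m-1})/(1-\alpha_m)\bigr)\nearrow 1$, and noting that $r\leq\delta_n$ forces $m(r)\geq n$, this delivers $E\subseteq\bigcap_n E^{\gamma_n,\delta_n}$, which is in fact stronger than SUDT.

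The main obstacle is choosing the right level at which to pigeonhole. The seemingly natural choice, working inside $I_m(x)$ with $d_m\geq r$, fails: when $r$ is comparable to $d_m$ and $x$ sits near the midpoint of $I_m(x)$, both $[x-r,x]$ and $[x,x+r]$ overhang $I_m(x)$ on opposite sides, and the best one-sided density one can extract is roughly $1/2$, bounded away from $1$. Lifting one level up to $I_{m-1}(x)$ and insisting on $2r\leq d_{m-1}$ is precisely what forces the pigeonhole to return a half-interval inside the parent, after which the self-similar mass $\beta_{m-1}d_{m-1}$ finishes the job.
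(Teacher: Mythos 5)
Your proof is correct and takes essentially the same approach as the paper: both compute the self-similar tail density $\beta_m$, pigeonhole so as to fit a one-sided interval of length $r$ with endpoint $x$ inside a suitable Cantor-level interval, and then bound the deficit by the measure $(1-\beta_m)d_m$ of the complement of $E$ in that interval. The differences are purely in bookkeeping --- the paper fixes the longer half $J_n$ of $I_n$ (of length $r_n\ge d_n/2$) and lets $\delta$ range over $[r_{n+1},r_n]$, whereas you fix $r$ and pass to the parent interval $I_{m(r)-1}(x)$ --- together with the minor point, implicit in the paper as well, that one should discard the finitely many initial indices with $\gamma_n\le 0$ so that the sets $E^{\gamma_n,\delta_n}$ are defined.
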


\begin{proof}
Let $n\in\mathbb{N}$ be arbitrary. Suppose that $I = [a,b] \in \mathcal{I}_n$, so $|I|=d_n$.
Note that
$$\frac{|E\cap I|}{|I|}=\prod_{k=n+1}^\infty (1-\alpha_k)=\beta_n,$$
where $\beta_n \nearrow 1$ as $n\to\infty$ since $\sum \alpha_n < \infty$.
Choose $\gamma_n=1-12(1-\beta_n)$ and $\delta_n=\frac12 d_n$.

We claim that
\begin{equation}\label{E has SUDT}
E \subset \cup_{k=1}^\infty \cap_{n=k}^\infty E^{\gamma_n,\delta_n},
\end{equation}
and therefore $E$ has SUDT.

To verify the claim let $x \in E$ and for each $n \in  \mathbb{N}
 $ choose $I_n=[a_n,b_n]\in \mathcal{I}_n$ such that $x \in I_n$ so $\{x\}=\bigcap I_n$.
Now let $r_n=\max\{x-a_n,b_n-x\}$.
We assume without loss of generality that
$\alpha_n < 1/3$ for each $n$.
Then it follows easily that
$$d_{n+1}> \frac13 d_n, \quad \quad \frac12d_n \le r_n \le d_n, \quad \quad \frac16 r_n < r_{n+1} < r_n.$$
For each $n \in  \mathbb{N}
 $ we let $J_n=[x-r_n,x]=[a_n,x]$ if $x-a_n > b_n-x$ and $J_n =[x,x+r_n]=[x,b_n]$ otherwise.
Then it follows from $r_n \ge \frac12d_n$ that $\frac{|E \cap J_n|}{|J_n|} \ge 1-2(1-\beta_n)$.
Similarly, for every $\delta$ satisfying $r_{n+1} \le \delta \le r_n$ we can take $J=J_{n,\delta}$ to be a closed interval of length $\delta$ with $x$ as an endpoint and contained in $J_n$. 
Since $$ |J\sm E| \leq  |J_n \sm E| \leq  |I_n \sm E| = (1 -\beta_n )|I_n | $$  $$= (1 -\beta_n )d_n \leq  12(1 -\beta_n )r_{n+1} \leq  12(1 -\beta_n )|J|.
$$
we have $\frac{|E \cap J|}{|J|} \ge 1- 12 (1-\beta_n)=\gamma_n$.
It now follows easily that $x \in \bigcup_{k=1}^\infty \bigcap_{n=k}^\infty E^{\gamma_n,\delta_n}$ and therefore (\ref{E has SUDT}) holds.
\end{proof}

At a first glance, one might believe that if $K$ is an SUDT set, then each of its points is a left or right density point. We will refute this belief by proving that the SUDT set provided by Theorem \ref{fat cantor density} does not have this property. In order to state this result a bit more generally, we introduce weakly nowhere dense sets:

\begin{definition}\label{*defwnd}
The set $E\subseteq{\mathbb{R}}$ is \emph{weakly nowhere dense}  if for any interval $J$, the subset $E\cap{J}$ does not have full measure in $J$.
\end{definition}

Let us notice that if $E$ is weakly nowhere dense and $\alpha\in (0,1)$ is a fixed positive real number then
 by Lebesgue's density theorem applied to the complement of $E$,
for any interval $J$ there exists a subinterval $I\subseteq{J}$ such that $|I\cap{E}|<\alpha|I|$. Moreover, it is also clear that a nowhere dense set is weakly nowhere dense.

\begin{theorem}\label{*th010}
Assume that $E$ is a weakly nowhere dense set.
Then the set $D_L(E)$ of left density points (resp. the set $D_R(E)$ of right density points) is of first category in $E$.
\end{theorem}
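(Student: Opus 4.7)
The plan is to prove first category for $D_R(E)$; the case $D_L(E)$ is symmetric. For integers $n\ge 2$ and $k\ge 1$ define
$$A_{n,k}=\left\{x\in E:\ \frac{|E\cap[x,x+r]|}{r}\ge 1-\frac{1}{n}\ \text{for every }r\in(0,1/k]\right\}.$$
Two observations reduce the theorem to a density estimate: first, $D_R(E)\cap E\subseteq\bigcup_{k=1}^{\infty} A_{n,k}$ for every fixed $n$, because any right density point of $E$ satisfies the above inequality for all sufficiently small radii; second, an argument identical to Lemma~\ref{*lemegdcloa}, based on the continuity of $y\mapsto |E\cap[y,y+r]|$, shows each $A_{n,k}$ is relatively closed in $E$. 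Therefore it suffices to fix $n=2$ and prove that every $A_{2,k}$ has empty interior in $E$; then each $A_{2,k}$ is nowhere dense in $E$ and $D_R(E)\cap E$ is first category in $E$.

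To show $A_{2,k}$ has empty interior, I argue by contradiction: suppose an open interval $U=(u,v)$ satisfies $\emptyset\ne U\cap E\subseteq A_{2,k}$. After shrinking $U$ around a point of $U\cap E$, assume also $|U|\le 1/k$. Fix $x_0\in U\cap E$; applying the consequence of weak nowhere density recorded after Definition~\ref{*defwnd} to $(x_0,v)$ with the constant $\alpha=1/3$ produces a closed subinterval $I=[a,b]\subseteq(x_0,v)$ with $|E\cap I|<\tfrac{1}{3}|I|$. Let $y=\sup(E\cap[x_0,a])$, well defined with $x_0\le y\le a$ since $x_0\in E\cap[x_0,a]$, and pick a sequence $y_j\in E\cap[x_0,a]$ with $y_j\to y$ (taking the constant sequence when $y\in E$). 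Each $y_j$ lies in $U\cap E\subseteq A_{2,k}$.

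Setting $r_j=b-y_j\in(0,1/k]$, the definition of $y$ gives $E\cap(y,a]=\emptyset$, hence
$$|E\cap[y_j,b]|=|E\cap[y_j,y]|+|E\cap(y,b]|\le (y-y_j)+\tfrac{1}{3}(b-a),$$
while $b-y_j\ge b-y\ge b-a>0$. As $j\to\infty$ the upper bound $[(y-y_j)+\tfrac{1}{3}(b-a)]/(b-y_j)$ converges to $\tfrac{1}{3}(b-a)/(b-y)\le\tfrac{1}{3}<\tfrac{1}{2}$, so for $j$ large the ratio $|E\cap[y_j,b]|/r_j$ is strictly less than $\tfrac{1}{2}=1-\tfrac{1}{2}$, contradicting $y_j\in A_{2,k}$. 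The main obstacle is the possibility $y\notin E$: this contingency is exactly what forces the approximation of $y$ by a sequence $y_j\to y$ in $E$ and the absorption of the error $(y-y_j)$ into the density estimate above.
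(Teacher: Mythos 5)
Your proof is correct and is essentially the same argument as the paper's: decompose the set of one-sided density points into countably many pieces with a uniform lower density bound on a uniform scale, use the weakly-nowhere-dense hypothesis to produce a low-density subinterval $[a,b]$, and then locate a point of $E$ just to the left of $a$ that is forced to have high right density, for a contradiction. The differences are organizational rather than substantive: the paper argues by contradiction, deducing from ``some $H_n$ is of second category'' that $H_n$ is dense in some $J\cap E$, whereas you directly show each (relatively closed) piece $A_{2,k}$ has empty interior in $E$; and the paper's translation trick (``we may assume $b\in E$'') is replaced in your version by taking $y=\sup(E\cap[x_0,a])$ together with an approximating sequence $y_j\to y$ in $E$, which explicitly handles the possibility that the supremum is not attained in $E$ --- a point the paper's wording leaves implicit but which you absorb cleanly into the limit of the ratio. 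Fixing $n=2$ reproduces the threshold $1/2$ that the paper hard-codes in the definition of $H_n$.
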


\begin{proof}[Proof of Theorem \ref{*th010}]
We use an argument similar to the one used in \cite{[Bu1]}.
Proceeding towards {a} contradiction, assume that $D_L(E)$ is of second category in $E$.
Set
\begin{equation}\label{*Hndef}
H_n=\left\{x\in{E}:\forall h\in\Big (0,\frac{1}{n}\Big ),\  \frac{|(x-h,x)\cap{E}|}{h}>{\frac{1}{2}}\right\}.
\end{equation}
Then $D_L(E)\subseteq\bigcup_{n=1}^{\infty}H_n$ clearly holds, hence there exists $n$ such that $H_n$ is of second category in $E$.
Consequently, there exists an open interval $J$ such that $J\cap{E}\neq\emptyset$ and $H_n$ is dense in $J\cap{E}$.
As $E$ is weakly nowhere dense, by the previous remark we can choose an interval $I=(a,b)$ such that $[a,b]\subseteq{J}$, we have $b-a<\frac{1}{n}$,  $\frac{|(a,b)\cap{E}|}{b-a}<{\frac{1}{4}}$,
and $E \cap J \cap [b,\infty)  \neq \emptyset$.
Moreover, {we may assume $b \in E$} as otherwise we can translate the interval $I$ {to the right} until we arrive at such a point.
Now since $H_n$ is dense in $J\cap{E}$ we can choose a point $x\in H_n$ such that $|x-b|<{(b-a)}/{4}$.
However, for this $x$ and $h=b-a<\frac{1}{n}$ we have
\begin{displaymath}
\frac{|(x-h,x)\cap{E}|}{h}\leq\frac{\frac{1}{4}(b-a)+\frac{1}{4}(b-a)}{(b-a)}=\frac{1}{2},
\end{displaymath}
contradicting $x\in{H_n}$. {This} concludes the proof.
\end{proof}

\begin{corollary}\label{noonesideddense}
If $E$ is a non-empty weakly nowhere dense $G_\delta$ set, then it has points which are not one-sided density points.
\end{corollary}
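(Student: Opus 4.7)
The plan is to deduce the corollary from Theorem \ref{*th010} via a Baire category argument, combining the first-category conclusion for left and right density points with the topological completeness of $G_\delta$ subsets of $\R$.

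First I would apply Theorem \ref{*th010} to obtain that both $D_L(E)$ and $D_R(E)$ are of first category in $E$. Their union $D_L(E)\cup D_R(E)$ is therefore also of first category in $E$ (a countable union of nowhere dense subsets of $E$). Any point in $E\setminus (D_L(E)\cup D_R(E))$ is a point of $E$ which is neither a left density point nor a right density point, hence not a one-sided density point; it therefore suffices to show this complement is non-empty.

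Next I would use that $E$, as a non-empty $G_\delta$ subset of the Polish space $\R$, is itself a Polish space in the subspace topology; equivalently it is completely metrizable (a standard fact). By the Baire category theorem, a non-empty completely metrizable space is not of first category in itself. Therefore $D_L(E)\cup D_R(E)\subsetneq E$, and the set of points in $E$ that fail to be one-sided density points is in fact comeager in $E$.

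The only step that requires any care is matching the statement of Theorem \ref{*th010} to the Baire setting: one must observe that "first category in $E$'' is meant in the subspace topology, so that the union of two first-category sets is again first category in $E$, and that the Polish/Baire property of $E$ is what forbids the whole of $E$ from being exhausted by this meager union. Given these observations, the proof is just these few lines and presents no essential obstacle beyond invoking the right topological framework.
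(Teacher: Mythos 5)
Your argument is correct and is essentially the same as the paper's: you apply Theorem \ref{*th010} to conclude that $D_L(E)\cup D_R(E)$ is first category in $E$, then invoke the fact that a non-empty $G_\delta$ subset of $\R$ is a Baire space (the paper cites this as Alexandrov's theorem, which is the same as your "$G_\delta$ subsets of Polish spaces are Polish" observation), and conclude by the Baire Category Theorem. No gaps.
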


Notably, the nowhere dense, closed SUDT set provided by Theorem \ref{fat cantor density} has points which are not one-sided density points.

\begin{proof}[Proof of Corollary \ref{noonesideddense}]
The set of one-sided density points is the union of $D_L(E)$ and $D_R(E)$, hence it is a first category set by the previous theorem.
However, as $E\subseteq{\mathbb{R}}$ is $G_\delta$, it is a Baire space by Alexandrov's Theorem (see \cite{Ku} for example), thus we can apply {the} Baire Category Theorem to obtain the statement of the corollary.
\end{proof}

Now we will prove Theorem \ref{*thSUDTnonUDT} with the help of Theorem \ref{fat cantor density}.

\begin{proof}[Proof of Theorem  \ref{*thSUDTnonUDT}.]
Using  Definition \ref{*notcantsim}
let $E_n^*\sim(\alpha_{k,n})_{k=1}^\infty$ such that $|E_n^*|=\frac{1}{2^n}$.  {It} is easy to check that there exist such sequences $(\alpha_{k,n})_{k=1}^\infty$ satisfying $\sum_{k=1}^{\infty}\alpha_{k,n}<\infty$.
Then the set of intervals which are contiguous to any of these sets is countable.
Now set $E_1=E_1^*$.
Next we let $E_2$ be a homothetic image of $E_2^*$ centered in a contiguous interval to $E_1$ in $[0,1]$.
Secondly, we define $E_3$ as a homothetic image of $E_3^*$ centered in a contiguous interval to $E_1\cup E_2$, etc.
We proceed recursively so that none of the occuring complementary intervals remain empty by the end {of} the process.
By countability we can do so.
Consequently the set $E=\bigcup_{n=1}^{\infty}E_n$ is a dense, $F_\sigma$ set.
By Theorem \ref{fat cantor density}
and (iii) of Proposition \ref{*prop06}
 it has SUDT. We claim that it is a good example for the statement of the theorem.

To verify that take any $G_\delta$ set $\widetilde{E}$ satisfying $|E\Delta\widetilde{E}|=0$.
By construction, the set $E$ has positive measure in any nontrivial subinterval of $[0,1]$.
Consequently $\widetilde{E}$ must be dense in $[0,1]$.
As {$\widetilde{E}$} is also $G_\delta$, we have that $\widetilde{E}$ is residual.
Proceeding towards a contradiction, assume that $\widetilde{E}$ has UDT, that is
\begin{equation}\label{*eq1}
\widetilde{E}\subseteq\bigcap_{i=1}^{\infty}\bigcup_{j=i}^{\infty}\widetilde{E}^{\gamma_j,\delta_j}
\end{equation}
for suitable sequences $(\gamma_j)$, $(\delta_j)$.
As $\widetilde{E}$ equals $E$ modulo null-sets, we obviously have that $E^{\gamma,\delta}=\widetilde{E}^{\gamma,\delta}$ for any choice of $\gamma,\delta$.
Hence \eqref{*eq1} can be rewritten as
\begin{equation}
\widetilde{E}\subseteq\bigcap_{i=1}^{\infty}\bigcup_{j=i}^{\infty}E^{\gamma_j,\delta_j}.
\end{equation}
In particular, we have
\begin{equation}
\widetilde{E}\subseteq\bigcup_{j=1}^{\infty}E^{\gamma_j,\delta_j}.
\end{equation}
By Lemma \ref{*lemegdcloa}	each of the sets $E^{\gamma_j,\delta_j}$ is closed and their union contains the residual set $\widetilde{E}$.
Consequently, for suitable $i$ the set $E^{\gamma_i,\delta_i}$ contains an open interval $I$.
The definition of $E^{\gamma_i,\delta_i}$ implies that $I$ cannot contain any
density points of the complement of $E$. 
Hence $E$ is of full measure in $I$.
However, we will show below that $E$ is weakly nowhere dense and hence
cannot be of full measure in $I$ which is a contradiction proving the theorem.

Therefore, suppose that $I$ is a given non-empty open subinterval of $[0,1]$. 
Since $E$ is dense in $[0,1]\supset I$
choose $n>1$  such that we can find $a,b\in I\cap \cup_{l=1}^{n}E_{l}$, $a<b$
 such that $(a,b)\cap \cup_{l=1}^{n}E_{l}=\ess$. Then by the definition of $E$
 we have
 $$\left|E\cap (a,b)\right|\leq\sum_{j=n}^{\infty}\frac{1}{2^j}\frac{b-a}{2}<\frac{1}{2}(b-a).$$
\end{proof}

\section{Approximating measurable sets with $\Lip 1$ sets}\label{*seclip15}

\begin{lemma}\label{suru}
If $U\subset\R$ is open, $\widetilde{H}\subset U$ is measurable and $\eps>0$, then there is an open set $H\subset U$ such that $|\widetilde{H}\setminus H|=0$, and if $I=(a,b)$ is a bounded component of $H$, then $\widetilde{H}$ is right dense at $a$ and left dense at $b$ and for every $r\in(0,b-a)$ we have
\begin{equation}\label{H_jo}
\max\Big\{\frac{|(a,a+r)\setminus\widetilde{H}|}{r}, \frac{|(b-r,b)\setminus\widetilde{H}|}{r}\Big\} < \eps.
\end{equation}
\end{lemma}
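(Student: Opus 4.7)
The plan is to construct $H$ as a countable disjoint union of open intervals $(\alpha_k,\beta_k)$ whose endpoints are density points of $\widetilde{H}$ lying in a common ``uniformly good'' closed set, with length small enough that the required $\eps$-bound is automatic. By the Lebesgue density theorem the set $\widetilde{H}^*$ of density points of $\widetilde{H}$ in $U$ satisfies $|\widetilde{H}\setminus\widetilde{H}^*|=0$. For each $n\in\N$ I introduce the closed set
\[
F_n=\Bigl\{x\in U:[x-1/n,x+1/n]\subseteq U,\ \tfrac{|(x-s,x+s)\cap\widetilde{H}|}{2s}\geq 1-\tfrac{\eps}{8}\text{ for every }s\in(0,1/n]\Bigr\};
\]
each $F_n$ is closed, the sequence is increasing, $\widetilde{H}^*\subseteq\bigcup_n F_n$ modulo a null set, and $F_n\subseteq\widetilde{H}^*$ modulo a null set. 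The key observation is that for $\alpha,\beta\in F_n$ with $0<\beta-\alpha\leq 1/n$ the single-sided estimates $|(\alpha,\alpha+r)\setminus\widetilde{H}|/r\leq\eps/4$ and $|(\beta-r,\beta)\setminus\widetilde{H}|/r\leq\eps/4$ hold automatically for every $r\in(0,\beta-\alpha]$, so such $(\alpha,\beta)$ is a good interval in the sense of the lemma.

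To build $H$, I would iterate a Vitali argument. At stage $m\geq 1$, working with the residual $R_m=\widetilde{H}^*\setminus H^{(m-1)}$ (starting from $H^{(0)}$ empty), Vitali's theorem applied to the fine family $\{(x-s,x+s):x\in F_n\cap R_m,\ s\leq 1/(2n),\ n\in\N\}$ yields a disjoint countable subfamily $\{I_k^{(m)}\}$ covering $R_m$ modulo a null set. I would choose each radius $s_k$ small enough that $I_k^{(m)}$ avoids $\overline{H^{(m-1)}}$ and $|F_{n_k}\cap I_k^{(m)}|\geq(1-\eps/16)|I_k^{(m)}|$, both achievable at almost every $x_k\in R_m$ since $H^{(m-1)}$ is open and $F_{n_k}$ has density $1$ at its density points. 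Each $I_k^{(m)}$ is then trimmed to $(\alpha_k,\beta_k)$ with $\alpha_k,\beta_k\in F_{n_k}\cap\widetilde{H}^*$ approximating $\inf(F_{n_k}\cap I_k^{(m)})$ and $\sup(F_{n_k}\cap I_k^{(m)})$ to arbitrary tolerance, which is possible because $F_{n_k}\setminus\widetilde{H}^*$ is null. This forces $|I_k^{(m)}\setminus(\alpha_k,\beta_k)|\leq(\eps/8)|I_k^{(m)}|$, and together with $\sum_k|I_k^{(m)}|\leq(1-\eps/16)^{-1}|R_m|$ (from the density constraint) yields $|R_{m+1}|\leq c(\eps)|R_m|$ for some $c(\eps)<1$ when $\eps$ is small. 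The residuals decay geometrically, so $H=\bigcup_{m,k}(\alpha_k,\beta_k)$ satisfies $|\widetilde{H}\setminus H|=0$.

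By construction $H$ is open, contained in $U$, and a disjoint union of the trimmed intervals, so every bounded component of $H$ is a single $(\alpha_k,\beta_k)$. Its endpoints are density points of $\widetilde{H}$, giving the right density at $\alpha_k$ and the left density at $\beta_k$, and the membership $\alpha_k,\beta_k\in F_{n_k}$ combined with $\beta_k-\alpha_k\leq 1/n_k$ yields the uniform $\eps/4<\eps$ bound for every $r\in(0,\beta_k-\alpha_k]$. The main obstacle I expect is the chain-linking phenomenon: naively unioning good intervals produces components whose length can exceed every scale $1/n$, and then the endpoint density estimate amplifies with the chain length, eventually losing the $\eps$ bound; enforcing Vitali disjointness and restricting each iteration to residuals disjoint from previously constructed intervals is what keeps every component a single good interval. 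A minor technical point is that trimmed endpoints must lie in $\widetilde{H}^*$ rather than just $F_{n_k}$, handled by approximating inf and sup within the full-measure subset $F_{n_k}\cap\widetilde{H}^*$.
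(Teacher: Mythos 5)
Your approach is genuinely different from the paper's: you try to produce $H$ as a \emph{disjoint} union of ``good'' intervals, so that each component of $H$ is a single good interval and the chain-linking phenomenon never arises, whereas the paper allows the intervals $I_{x_j}$ to overlap (with multiplicity at most $2$), accepts that a component of $H$ is a whole chain of them, and controls the endpoint density by the bounded-overlap estimate together with a forced decay $\eps_{x_k}\to 0$ along the enumeration. Your good-interval lemma (endpoints in a common $F_n$, length $\le 1/n$) is correct, and disjointness would indeed be the cleanest way to guarantee that every bounded component inherits the $\eps$-bound and the one-sided density at both endpoints.

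However, the iteration has a real gap at the step ``choose $s_k$ small enough that $I_k^{(m)}$ avoids $\overline{H^{(m-1)}}$, achievable at almost every $x_k\in R_m$ since $H^{(m-1)}$ is open.'' Openness of $H^{(m-1)}$ does not give this: the residual $R_m=\widetilde H^*\setminus H^{(m-1)}$ lies in the closed set $\R\setminus H^{(m-1)}$, and after one stage $H^{(m-1)}$ is typically a countable union of small disjoint intervals that is dense in the region being covered; then $\overline{H^{(m-1)}}\setminus H^{(m-1)}$ is a closed nowhere dense set which may contain all of $R_m$, so \emph{no} point of $R_m$ admits a neighbourhood disjoint from $\overline{H^{(m-1)}}$, and no new interval can be placed at all. (Concretely: take $\widetilde H=(0,1)$; after trimming, the uncovered set $R_2$ can be a positive-measure nowhere dense set on which the stage-one intervals accumulate from both sides.) A secondary issue is the contraction estimate: the constraint $|F_{n_k}\cap I_k^{(m)}|\ge(1-\eps/16)|I_k^{(m)}|$ does not yield $\sum_k|I_k^{(m)}|\le(1-\eps/16)^{-1}|R_m|$, because $F_{n_k}\cap I_k^{(m)}$ need not lie in $R_m$; you would also have to enforce, say, $|R_m\cap I_k^{(m)}|\ge\frac12|I_k^{(m)}|$ (legitimate by Lebesgue density applied to $R_m$) to bound $\sum_k|I_k^{(m)}|\le 2|R_m|$. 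Both problems can be repaired by selecting only \emph{finitely many} intervals per stage (so $\overline{H^{(m)}}\setminus H^{(m)}$ is finite, hence null, and one still gets geometric decay of $|R_m|$ by taking a large enough finite Vitali subfamily), but as written the argument does not go through.
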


\begin{proof}
If $|\widetilde{H}|=0$ then $H:=\emptyset$ is a suitable choice, hence we can assume that $|\widetilde{H}|>0$.

First we prove that if $x$ is a density point of $\widetilde{H}$ and $\eps_x>0$, then there is an interval $I_x=(a_x,b_x)\subset U$ which contains $x$, its endpoints are density points of $\widetilde{H}$ and for every $r_x\in(0,b_x-a_x)$ we have
\begin{equation}\label{dense2}
\max\Big\{\frac{|(a_x,a_x+r_x)\setminus\widetilde{H}|}{r_x},\frac{|(b_x-r_x,b_x)\setminus\widetilde{H}|}{r_x}\Big\} < \eps_x.
\end{equation}
Since $x$ is a density point of $\widetilde{H}$ we can take an open interval $I'_x=(a'_x,b'_x)\subset U$ centered at $x$ for which
\begin{equation}\label{dense1}
\frac{|I'_x\setminus\widetilde{H}|}{|I'_x|} < \frac{\eps_x}{16}.
\end{equation}
Let 
\begin{equation}\label{H_x}
\begin{split}
H_x := \Big\{ x'\in [a'_x,x] : &\text{ $\exists$ $r'_{x'}\in \Big[0,\frac{b'_x-a'_x}{2}\Big]$ such that } \\
&\text{ $\dfrac{|[x',x'+r'_{x'}]\setminus \widetilde{H}|}{r'_{x'}}\ge\eps_x$} \Big\}.
\end{split}
\end{equation}

For every $x'\in H_x$ fix such an $r'_{x'}$.
Recall the well-known fact that from a finite covering by intervals we can extract a subcover which 
 covers the same set, but which does not cover any point more than twice.
Choose a finite subset $X'$ of $H_x$ such that
\begin{equation}\label{max_ketto}
\text{for every $z\in\R$  we have that }\#\Big\{x'\in X' : z\in [x',x'+r'_{x'}]\Big\} \le 2
\end{equation}
and
\begin{equation}\label{kozelites}
\Big |\Union_{x'\in H_x} [x',x'+r'_{x'}]  \setminus \Union_{x'\in X'} [x',x'+r'_{x'}]\Big |< \frac{|I'_x|}{16}.
\end{equation}

By \eqref{kozelites} we obtain
\begin{align*}
|H_x| &\le \frac{|I'_x|}{16}+\Big |\Union_{x'\in X'} [x',x'+r'_{x'}]\Big |
\le  \frac{|I'_x|}{16}+\sum_{x'\in X'} r'_{x'} \\
&\underset{\text{by \eqref{H_x} }}{\le} \frac{|I'_x|}{16}+\sum_{x'\in X'}\frac{1}{\eps_x} |[x',x'+r'_{x'}]\setminus\widetilde{H}|\\
&\underset{\text{by \eqref{max_ketto}}}{\le} \frac{|I'_x|}{16}+\frac{1}{\eps_x}\cdot 2 |I'_x\setminus \widetilde{H} | \\
&\underset{\text{by \eqref{dense1} }}{\le} \frac{|I'_x|}{16}+\frac{1}{\eps_x}\cdot 2 \cdot \frac{\eps_x}{16} |I'_x| = \frac{3|I'_x|}{16}=\frac{3(b_{x}'-a_{x}')}{16}.
\end{align*}
Thus, by Lebesgue's density theorem, there exists a density point $a_x$ of $\widetilde{H}$ in $\left((3a'_x+b'_x)/4,(a'_x+b'_x)/2\right)$ such that
$a_{x}\not\in H_{x}$ and hence
 $r_x\in \left(0,\frac{b'_x-a'_x}{2}\right)$ implies
$$
\frac{|(a_x,a_x+r_x)\setminus\widetilde{H}|}{r_x} < \eps_x.
$$
Similarly, there exists a density point $b_x$ of $\widetilde{H}$ in $((a'_x+b'_x)/2,(a'_x+3b'_x)/4)$ such that $r_x\in (0,\frac{b'_x-a'_x}{2})$ implies
$$
\frac{|(b_x-r_x,b_x)\setminus\widetilde{H}|}{r_x} < \eps_x.
$$
As $b_x-a_x<\frac{b'_x-a'_x}{2}$ and $x=\frac{a'_x+b'_x}{2}\in (a_x,b_x)$, the points $a_x$ and $b_x$ satisfy \eqref{dense2}.

We choose a subset $X=\{x_1,x_2,\ldots\}$ of the density points of $\widetilde{H}$ with their corresponding neighbourhoods $\left\{I_{x_1},I_{x_2},\ldots\right\}$, a sequence of positive numbers $\{\eps_{x_1},\eps_{x_2},\ldots\}$ and a strictly increasing sequence of natural numbers $(m_i)_{i=1}^\infty$ such that they satisfy properties (\ref{*dden}-\ref{nagy}):
\begin{equation}\label{*dden}
\text{\eqref{dense2} holds  with $x$ replaced with $x_n$,}
\end{equation}
 \begin{equation}\label{szoveg_lemma}
 \text{each real number is contained in at most two of $\{I_{x_1},I_{x_2},\ldots\}$,}
\end{equation}
\begin{equation}\label{*epskk}
\eps_{x_k} < \frac{\eps}{4} \text{ for every $k\in\N$},
\end{equation} 
\begin{equation}\label{eps'_kicsi}
\eps_{x_k} < \frac{\eps}{4i} \text{ for every $i\in\N$ and $k \ge m_i$},
\end{equation}
and
\begin{equation}\label{nagy}
\Big |\left(\widetilde{H}\cap [-i,i]\right) \setminus \Union_{j=1}^{m_i} I_{x_j}\Big | < \frac{1}{i}.
\end{equation}
Denote
\begin{equation}\label{H}
H := \Union_{j=1}^\infty I_{x_j}.
\end{equation}
By \eqref{nagy} we have that $
|\widetilde{H}\setminus H|=0.
$
 Let $I=(a,b)$ be a component of $H$ and $r\in (0,b-a)$.
By \eqref{H}, there is a $j_r\in\N$ such that
\begin{equation}\label{nagy_resz}
\Big |(a,a+r)\setminus\Union_{j=1}^{j_r} I_{x_j}\Big | < r\frac{\eps}{2}.
\end{equation}
Hence we have
\begin{align*}
&\frac{|(a,a+r)\setminus\widetilde{H}|}{r}  \le  \frac{\Big |(a,a+r)\setminus \Union\limits_{j=1}^{j_r} I_{x_j}\Big |+\Big |\Big (\Union\limits_{j=1}^{j_r} I_{x_j}\cap(a,a+r)\Big )\setminus\widetilde{H}\Big |}{r} \\
&\underset{\text{by \eqref{nagy_resz} }}{<}  \frac{\frac{\eps}{2}\cdot r+\sum\limits_{j=1}^{j_r} |\left(I_{x_j}\cap(a,a+r)\right)\setminus\widetilde{H}|}{r}  
\underset{\text{by \eqref{dense2} }}{\le}  \frac{\frac{\eps}{2}\cdot r+\sum\limits_{j=1}^{j_r} \eps_{x_j}|I_{x_j}\cap(a,a+r)|}{r} \\
&\underset{\text{ by \eqref{*epskk}}}{\le} \frac{\frac{\eps}{2}\cdot r+\sum\limits_{j=1}^{j_r} \frac{\eps}{4}|I_{x_j}\cap(a,a+r)|}{r}  \underset{\text{by \eqref{szoveg_lemma} }}{\le} \frac{\frac{\eps}{2}\cdot r+\frac{\eps}{4}\cdot 2r}{r} = \eps.
\end{align*}
Similarly, we obtain
\begin{align*}
\frac{|(b-r,b)\setminus\widetilde{H}|}{r} < \eps,
\end{align*}
hence $H$ satisfies \eqref{H_jo}.

To show that $a$ is a right density point of $\widetilde{H}$ take an arbitrary $\eps^*>0$.
If $a$ is a left endpoint of $I_{x_k}$ for some $k\in\N$, we are done.
Otherwise, take an $i^*\in\N$ such that
\begin{equation}\label{i_nagy}
\frac{\eps}{i^*} < \eps^*,
\end{equation}
and a $\delta^*\in (0,b-a)$ such that
\begin{equation}\label{delta^*_kicsi}
(a,a+\delta^*)\cap \Union_{j=1}^{m_{i^*}} I_j = \emptyset.
\end{equation}
According to \eqref{H} we have that $(a,a+{\delta^*})\subset \Union_{j=1}^{\infty} I_{x_j}$, hence \eqref{delta^*_kicsi} implies that $(a,a+{\delta^*}) \subset \Union_{j=m_{i^*}}^{\infty} I_{x_j}$.
Consequently, there is a $j_{\delta^*}\in\N$ for which
\begin{equation}\label{nagy_resz_2}
\Big|(a,a+{\delta^*})\setminus\Union_{j=m_{i^*}}^{j_\delta^*} I_{x_j}\Big| < \frac{\eps^*}{2}\cdot \delta^*.
\end{equation}

Thus
\begin{align*}
&\frac{|(a,a+\delta^*)\setminus\widetilde{H}|}{\delta^*} < \frac{\Big |(a,a+\delta^*)\setminus \Union\limits_{j=m_{i^*}}^{j_{\delta^*}} I_{x_j}\Big |+\Big |\Big (\Union\limits_{j=m_{i^*}}^{j_{\delta^*}} I_{x_j}\cap(a,a+\delta^*)\Big )\setminus\widetilde{H}\Big |}{{\delta^*}} \displaybreak[0]\\
&\underset{\text{ by \eqref{nagy_resz_2}}}{\le} \frac{\frac{\eps^*}{2}\cdot \delta^*+\sum\limits_{j=m_{i^*}}^{j_{\delta^*}} |(I_{x_j}\cap(a,a+{\delta^*}))
\setminus\widetilde{H}|}{{\delta^*}} \displaybreak[0]\\
&\underset{\text{by \eqref{dense2} }}{\le} \frac{\frac{\eps^*}{2}\cdot \delta^*+\sum\limits_{j=m_{i^*}}^{j_{\delta^*}} \eps_{x_j}|I_{x_j}\cap(a,a+\delta^*)|}{\delta^*} \displaybreak[0]\\
&\underset{\text{by \eqref{eps'_kicsi} }}{\le} \frac{\frac{\eps^*}{2}\cdot \delta^*+\sum\limits_{j=m_{i^*}}^{j_{\delta^*}} \frac{\eps}{4i^*}|I_{x_j}\cap(a,a+\delta^*)|}{\delta^*} \displaybreak[0]\\
&\underset{\text{by \eqref{i_nagy} }}{\le} \frac{\frac{\eps^*}{2}\cdot \delta^*+\sum\limits_{j=m_{i^*}}^{j_{\delta^*}} \frac{\eps^*}{4}|I_{x_j}\cap(a,a+\delta^*)|}{\delta^*} \underset{\text{by \eqref{szoveg_lemma} }}{\le} \frac{\frac{\eps^*}{2}\cdot \delta^*+\frac{\eps^*}{4}\cdot 2\delta^*}{\delta^*} = \eps^*.
\end{align*}
Hence $a$ is a right density point of $\widetilde{H}$, and we obtain in the same way that $b$ is a left density point of $\widetilde{H}$.
This concludes the proof.
\end{proof}

The next lemma is in \cite[Lemma 2.4]{[BHMVlip]}.

\begin{lemma}\label{novekedes}
 Suppose that $E\subset {\mathbb R}$ and $f\colon {\ensuremath {\mathbb R}}\rightarrow {\ensuremath {\mathbb R}}$ such that $\Lip f= \mathbf{1}_E$. Then $f$ is a Lipschitz function and $|f(x)-f(y)|\le |[x,y]\cap E|$ for every $x,y\in {\ensuremath {\mathbb R}}$ (where $x<y$).
\end{lemma}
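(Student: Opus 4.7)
My plan is to deduce the lemma from two classical ingredients: a pointwise bound on $\Lip f$ already forces $f$ to be globally Lipschitz, and for a Lipschitz function the a.e.\ derivative is dominated by $\Lip f$, so the fundamental theorem of calculus finishes the argument.

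For the Lipschitz step, fix $x<y$ and $\eps>0$. Since $\Lip f\equiv\mathbf{1}_E\le 1$, every $t\in[x,y]$ has some $r_t>0$ with $M_f(t,r)\le 1+\eps$ for all $r\in(0,r_t]$; equivalently, $|f(s)-f(t)|\le(1+\eps)|s-t|$ whenever $|s-t|\le r_t$. Consider
$$
A=\{\,t\in[x,y]:|f(t)-f(x)|\le(1+\eps)(t-x)\,\},
$$
which contains $x$ and is closed in $[x,y]$ by continuity of $f$, so its supremum $s$ belongs to $A$. If $s<y$, picking any $r\in(0,r_s]$ with $s+r\le y$ and applying the triangle inequality would give
$$
|f(s+r)-f(x)|\le(1+\eps)r+(1+\eps)(s-x)=(1+\eps)(s+r-x),
$$
forcing $s+r\in A$ and contradicting $s=\sup A$. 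Hence $s=y$, and letting $\eps\to 0$ yields $|f(y)-f(x)|\le y-x$, so $f$ is $1$-Lipschitz.

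With $f$ Lipschitz in hand, it is absolutely continuous, $f'$ exists almost everywhere, and $f(y)-f(x)=\int_x^y f'(t)\,dt$. Wherever $f'(t)$ exists,
$$
|f'(t)|=\lim_{r\to 0^+}\frac{|f(t+r)-f(t)|}{r}\le\limsup_{r\to 0^+}M_f(t,r)=\Lip f(t)=\mathbf{1}_E(t),
$$
and integrating gives
$$
|f(y)-f(x)|\le\int_x^y|f'(t)|\,dt\le\int_x^y\mathbf{1}_E(t)\,dt=|[x,y]\cap E|,
$$
which is the claimed estimate. The only subtle point is the first step, where a merely pointwise bound on $\Lip f$ has to be upgraded to a uniform Lipschitz constant; the supremum argument above sidesteps any covering-lemma machinery, and the rest is standard real-variable calculus.
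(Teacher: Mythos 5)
Your proof is correct. Both steps are carried out cleanly: the supremum/connectedness argument promoting the pointwise bound $\Lip f\le 1$ to the global $1$-Lipschitz estimate is valid (the closedness of $A$ uses the continuity of $f$, which is itself a consequence of the pointwise finiteness of $\Lip f$, so there is no circularity), and the passage through absolute continuity, $|f'|\le \Lip f=\mathbf{1}_E$ a.e., and the fundamental theorem of calculus yields the measure bound immediately. One small point worth making explicit: the integral $\int_x^y\mathbf{1}_E$ makes sense because $\Lip f$, being a $\limsup$ of the continuous functions $M_f(\cdot,r)$, is Borel, so $E$ is Lebesgue measurable.

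Since the paper cites this as Lemma 2.4 of the preprint \cite{[BHMVlip]} rather than proving it here, a line-by-line comparison is not possible, but your route is the natural one for a one-dimensional statement of this kind. An alternative that avoids invoking a.e.\ differentiability and the Lebesgue FTC would be a direct chaining/Vitali-type covering of $[x,y]$ using the radii $r_t$: for $t\notin E$ one gets $|f(t+r)-f(t)|\le\eps r$, for $t\in E$ one gets $|f(t+r)-f(t)|\le(1+\eps)r$, and a finite cover with bounded overlap lets one compare the increment of $f$ to $|[x,y]\cap E|$ up to $O(\eps(y-x))$. That version is more elementary in that it uses only compactness, while yours is shorter once the standard real-variable machinery is taken for granted; both establish the same inequality.
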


We turn now to the proof of Theorem \ref{*EEtappr}

We first note here that if there exists a $G_{\delta}$ set $\widetilde{E}$ having UDT and satisfying $|E\triangle \widetilde{E}| = 0$, then Theorem \ref{*EEtappr} trivially follows from Theorem \ref{*thUDTLip1}. However, as Theorem \ref{*thSUDTnonUDT} highlights, it is not always possible to find such a set, even if $E$ has nice density behaviour.

\begin{proof}[Proof of Theorem \ref{*EEtappr}]
The construction is analogous to, but more complicated than the proof of Theorem \ref{*thUDTLip1}, which is presented in \cite[Theorem 5.5]{[BHMVlip]}.

To avoid some technical difficulties we observe that we can suppose that we work with essentially unbounded sets, that is
for all $\aaa\in \R$ we have
 $ |E\cap (-\oo,\aaa)|>0 $
and $ |E\cap (\aaa,+\oo)|>0$.

Indeed, suppose that we proved our theorem for such cases and, for example,  there exists $\aaa\in \R$
such that 
$ |E\cap (-\oo,\aaa)|=0 $ but 
$ |E\cap (\bbb,+\oo)|>0$ for all $\bbb\in \R$.

Then one can use $E\cup (-\oo,\aaa-2]$
to obtain a $\Lip 1$ set $E'$  such that $|E'\Delta (E\cup (-\oo, \aaa-2))|=0$.

Suppose that $h$ is a continuous function  such that $\Lip h(x)=\mathbf{1}_{E'}(x)$.
Then $h'(x)=0$ on $(\aaa-2,\aaa)$, hence $E'\cap (\alpha-2,\alpha)=\emptyset$. 
Set $\widetilde{E}=(\aaa-1,+\oo)\cap E'$.

Letting 
$$
f(x)=
\begin{cases}
h(x) & \text{if } x\geq \aaa-1 \\
h(\aaa-1)  & \text{if } x< \aaa-1
\end{cases}
$$
we obtain a continuous function for which $\Lip f(x)=\mathbf{1}_{\widetilde{E}}(x)$ and $|\widetilde{E}\Delta E|=0$. 

The reduction of the other essentially bounded cases
to the unbounded case is similar.

Given an open set  $G$ we say that a set $D$ is locally finite in $G$  if
$D\sse G$ and 
for any $x\in G$ there is a $\delta>0$ such that $D\cap (x-\delta,x+\delta)$ is finite. 

We will define a nested sequence of open sets $(G_n)_{n={0 }}^\infty$ and uniformly convergent sequences of continuous functions $(f_n)_{n={0 }}^\infty$, $(\mathcal{E}_n)_{n={0 }}^\infty$ and $(\mathcal{E}^n)_{n={0 }}^\infty$ such that for every $m,n\in\N\cup \{ 0 \}$, $m\le n$  we have
\begin{enumerate}[(A)]
\item\label{mm} $|E\setminus G_n| = 0$,
\item\label{mm2} $|E\triangle \bigcap_{n={1 }}^{\infty} G_n | = 0$,
\item\label{meredek} $\Lip(f_n) \le 1$ on $\R$ for $n\geq 0$ and hence $f_{n}$ is continuous,
\item\label{E_n flat} $\mathcal{E}_n$ and $\mathcal{E}^n$ have vanishing derivative on $F_n:=\R\setminus G_n$, and $\mathcal{E}_{n}|_{F_n}=\mathcal{E}^{n}|_{F_n}=f_n|_{F_n}$,
\item\label{surun_meredek} for $n\geq 1$ there is a  locally finite set $D_n$ in $G_n$ such that for every $x\in G_n$ there are $d_1,d_2\in D_n$ for which
$$
x\in[d_1,d_2], \text{ } 0<|d_1-d_2|\le \frac{1}{n} \text{ and } \Big |\frac{f_n(d_1)-f_n(d_2)}{d_1-d_2}\Big | \ge 1-\frac{1}{n},
$$
\item\label{fix_on_D_n} $f_{n}|_{F_m\cup D_m}=f_m|_{F_m\cup D_m}$,
\item\label{boritek}$\mathcal{E}_m \le \mathcal{E}_n \le f_n \le \mathcal{E}^n \le \mathcal{E}^m$.
\end{enumerate}

Next we show that the above assumptions imply Theorem \ref{*EEtappr}.

By \eqref{surun_meredek} for every $x\in\R$ there is an $x'\in F_m\cup D_m$ such that $|x-x'| \le \frac{1}{2m}$.  Thus Lemma \ref{novekedes}, \eqref{meredek} and \eqref{fix_on_D_n} imply that
\begin{align*}
|f_n(x)-f_m(x)| &\le |f_n(x)-f_n(x')|+|f_n(x')-f_m(x')|+|f_m(x')-f_m(x)| \\
&\le |x-x'|+0+|x-x'| \le 2|x-x'| \le \frac{1}{m},
\end{align*}
that is $\|f_n-f_m\|\le \frac{1}{m}$,  i.e. the sequence $(f_n)$ is Cauchy and
therefore convergent. Let $f = \lim_{n\to\infty} f_n$.
Moreover, \eqref{fix_on_D_n} and \eqref{surun_meredek} imply that $\Lip (f) \ge 1$ on $\bigcap_{n={1 }}^{\infty} G_n$, and $\Lip (f) \le 1$ by \eqref{meredek}. According to \eqref{E_n flat}, \eqref{fix_on_D_n} and \eqref{boritek} if $x\in\R\setminus \bigcap_{n={1 }}^{\infty} G_n$ then $\Lip f(x) =0$.
Thus $\widetilde{E} := \bigcap_{n={1 }}^{\infty} G_n$ will be a suitable choice by \eqref{mm2}
and this proves Theorem \ref{*EEtappr}.

Now we turn to the proof of the fact that conditions (\ref{mm}-\ref{boritek}) can be satisfied (the places where the individual conditions are verified are marked by
$\circledast$).
Let
\begin{itemize}
\item $G_{0 } := \R$,
\item $f_{0 } :\equiv 0$,
\item $D_{0 } := \{ z\in \Z : |[z,z+1]\cap{E}| > 0 \}$.
\end{itemize}
By these definitions we can define continuous functions $\mathcal{E}_{0 }$ and $\mathcal{E}^{0 }$ for which $\mathcal{E}_{0}\le f_0 \le \mathcal{E}^{0}$ and 
\begin{equation*}
\begin{gathered}
\text{if $d_1,d_2\in D_{0}$ are adjacent in $D_{0}$ and $x\in[d_1,d_2]$ then} \\
\min\big\{f_{0}(d_1)-\mathcal{E}_{0}(x),f_{0}(d_2)-\mathcal{E}_{0}(x),  \\
 \mathcal{E}^{0}(x)-f_{0}(d_1),\mathcal{E}^{0}(x)-f_{0}(d_2)\big\} \ge |d_1-d_2|.
\end{gathered}
\end{equation*}

Now we assume that $n\in\N$, and we have already defined $G_{k}$, $f_{k}$, $\mathcal{E}_{k}$, $\mathcal{E}^{k}$ and $D_{k}$ for every $k\in \{{0 },\ldots,n-1\}$ so that they satisfy \eqref{mm}, \eqref{E_n flat}, \eqref{fix_on_D_n}, \eqref{boritek} and the following assumptions:
\begin{equation}\label{szoveg2}
\begin{gathered}
\text{if $d_1,d_2\in D_{n-1}$ are adjacent in $D_{n-1}$ and $x\in[d_1,d_2]
\sse G_{n-1}$ then} \\
\min\big\{f_{n-1}(d_1)-\mathcal{E}_{n-1}(x),f_{n-1}(d_2)-\mathcal{E}_{n-1}(x),  \\
 \mathcal{E}^{n-1}(x)-f_{n-1}(d_1),\mathcal{E}^{n-1}(x)-f_{n-1}(d_2)\big\} \ge |d_1-d_2|,
\end{gathered}
\end{equation}
if $d_1,d_2\in D_{n-1}$ are adjacent, then
\begin{equation}\label{f_lapos}
\begin{gathered}
|E\cap (d_1,d_2)| > |f_{n-1}(d_1)-f_{n-1}(d_2)|,
\end{gathered}
\end{equation}
if $n>{1 }$, and $(a,b)$ is a component of $G_{n-1}$, then
\begin{equation}\label{szoveg3}
\begin{gathered}
\left\{\text{accumulation points of $\big(D_{n-1} \cap (a,b)\big)$}\right\} = \{a,b\}.
\end{gathered}
\end{equation}

Observe that $G_{0 }$, $D_{0 }$, $f_{0 }$, $\mathcal{E}_0$ and $\mathcal{E}^0$ indeed satisfy \eqref{mm}, \eqref{boritek}, \eqref{szoveg2} and \eqref{f_lapos}. As \eqref{E_n flat} and \eqref{fix_on_D_n} say nothing when $n=0$, they also hold.

We continue by defining $G_n$.
First we define the sets $\widetilde{G}^l_n\supset {G}^l_n\supset \widetilde{G}^{l+1}_n...$ by mathematical induction.
Let
\begin{equation}\label{G^1_n}
\begin{gathered}
\begin{aligned}
\widetilde{G}^1_n := G_{n-1}
\setminus \Big (D_{n-1}\cup \Big  \{\frac{z}{n} : z\in\Z\Big  \} \cup &\Big \{\text{midpoints of the components }\\
&\text{ of } G_{n-1} \setminus D_{n-1}\Big \}\Big ).
\end{aligned}
\end{gathered}
\end{equation}

Let $l>0$ and suppose that we have already defined an open set $\widetilde{G}^{l}_n$.
According to Lemma \ref{suru} there is an open set $G^l_n\subset \widetilde{G}^l_n$ such that 
\begin{equation}\label{*ETGnl}
\text{$|(E\cap\widetilde{G}^l_n) \setminus G^l_n|=0$,
}
\end{equation}
and it also satisfies the property that if $I=(a,b)$ is a component of $G^l_n$, then $a$ is a right density point of $E\cap \widetilde{G}^l_n$, $b$ is a left density point of $E\cap \widetilde{G}^l_n$ and for every $r\in(0,b-a)$ we have
\begin{equation}\label{suru_int}
\begin{gathered}
\begin{aligned}
& \max\Big \{\frac{|(a,a+r)\setminus{E}|}{r}, \frac{|(b-r,b)\setminus{E}|}{r}\Big \} \\
&\le \max\Big \{\frac{|(a,a+r)\setminus ({E}\cap\widetilde{G}^l_n)|}{r}, \frac{|(b-r,b)\setminus({E}\cap\widetilde{G}^l_n)|}{r}\Big \} < \frac{1}{4(n+l)^2}.
\end{aligned}
\end{gathered}
\end{equation}
If $G^l_n=\emptyset$, let $I^l_j:=\emptyset$ for every $j\in\N$.
Otherwise, we take some components $I^l_1,I^l_2,I^l_3,\ldots$ of $G^l_n$ such that every bounded interval contains finitely many of them and
\begin{equation}\label{G^l_n kicsi}
\Big |G^l_n \setminus \Union_{k=1}^{\infty} I^l_k \Big | < 2^{-l}.
\end{equation}
Define $\widetilde{G}^{l+1}_n := G^{l}_n \setminus \Union_{k=1}^{\infty} I^l_k$ and continue the induction.

Set
$$
G_n := \Union_{l=1}^\infty \Union_{k=1}^\infty I^l_k.
$$

By mathematical induction for every $l^*\in\N$ we will prove 
\begin{equation}\label{kozel}
\begin{split}
\Big |E \setminus \Big(G^{l^*}_n\cup \Union_{l=1}^{l^*-1} \Union_{k=1}^\infty I^l_k\Big) \Big | = 0.
\end{split}
\end{equation} 
 As 
$$
\Big |E \setminus G^{1}_n \Big | 
\underset{\text{by \eqref{*ETGnl} }}{=} \Big |E \setminus \widetilde{G}^{1}_n \Big | \underset{\text{by \eqref{G^1_n} }}{=}\Big | E \setminus G_{n-1}\Big |,
$$
\eqref{kozel} is true for $l^*=1$. Suppose that it holds for some $l^*\in\N$, then
\begin{equation*}
\begin{split}
\Big |E \setminus \Big(G^{l^*+1}_n\cup \Union_{l=1}^{l^*} \Union_{k=1}^\infty I^l_k\Big) \Big | 
&\underset{\text{by \eqref{*ETGnl} }}{=} \Big |E \setminus \Big(\widetilde{G}^{l^*+1}_n\cup \Union_{l=1}^{l^*} \Union_{k=1}^\infty I^l_k\Big) \Big | \\
&\leq  \Big |E \setminus \Big(G^{l^*}_n\cup \Union_{l=1}^{l^*-1} \Union_{k=1}^\infty I^l_k\Big) \Big | = 0.
\end{split}
\end{equation*} 
Hence, by \eqref{G^l_n kicsi} we obtain $|E \setminus G_n|=0$.
$\circledast$ Thus (\ref{mm}) holds at step $n$.

Moreover, according to \eqref{suru_int}, if  $I\subset G_n$ is a bounded interval such that at least one of its endpoints is an endpoint of a component of $G_n$, we have that $|\big(G_n\cap I\big)\setminus{E}| \le \frac{1}{4n^2}|I|$,
$\circledast$ {which implies (\ref{mm2}).}

Now we construct $f_n$.
We set $f_n := \mathcal{E}_n := \mathcal{E}^n := f_{n-1}$ on $F_{n-1}\cup D_{n-1}$.
As \eqref{fix_on_D_n} held in the previous steps of the induction,
$\circledast$ \text{(\ref{fix_on_D_n}) holds at step $n$ as well.}

Take an arbitrary interval
\begin{equation}\label{fix_on_D_n_holds}
\text{$I = (a,b)$ contiguous to $F_{n-1}\cup D_{n-1}$.}
\end{equation}
Then $F_{0}=\ess$ and  \eqref{szoveg3} for $n>1$ imply that $a,b\in D_{n-1}$.
According to \eqref{f_lapos}, for some $k^*\in\N$ there are 
finitely many different components $I_1,\ldots,I_{k^*}$ of $G_n\cap I$ such that $|f_{n-1}(a)-f_{n-1}(b)| < \sum_{i=1}^{k^*} |I_i\cap{E}|$.
We index these components in an increasing order on the real line.
We can assume without loss of generality that $f_{n-1}(a) \le f_{n-1}(b)$.
Denote by $a_i$ and $b_i$ the endpoints of $I_i$ for every $i\in \{1,\ldots,k^*\}$, and let
$$
f_n(a_i) := f_n(a) + \frac{\sum_{j=1}^{i-1} |I_j\cap{E}|}{\sum_{j=1}^{k^*} |I_j\cap{E}|}\left(f_n(b)-f_n(a)\right)
$$
and
$$
f_n(b_i) := f_n(a) + \frac{\sum_{j=1}^{i} |I_j\cap{E}|}{\sum_{j=1}^{k^*} |I_j\cap{E}|}\left(f_n(b)-f_n(a)\right).
$$
On $I\setminus G_n$ set
\begin{equation}\label{on_compl}
\begin{gathered}
f_n(x) := \mathcal{E}_n(x) := \mathcal{E}^n(x) := \max \big(\{f_n(b_i) : i\in \{1,\ldots,k^*\} \text{ and } b_i\le x\} \cup \{f(a)\}\big).
\end{gathered}
\end{equation}

\begin{figure}[!htb]
\begin{center}
\includegraphics[width=1 \textwidth, clip]{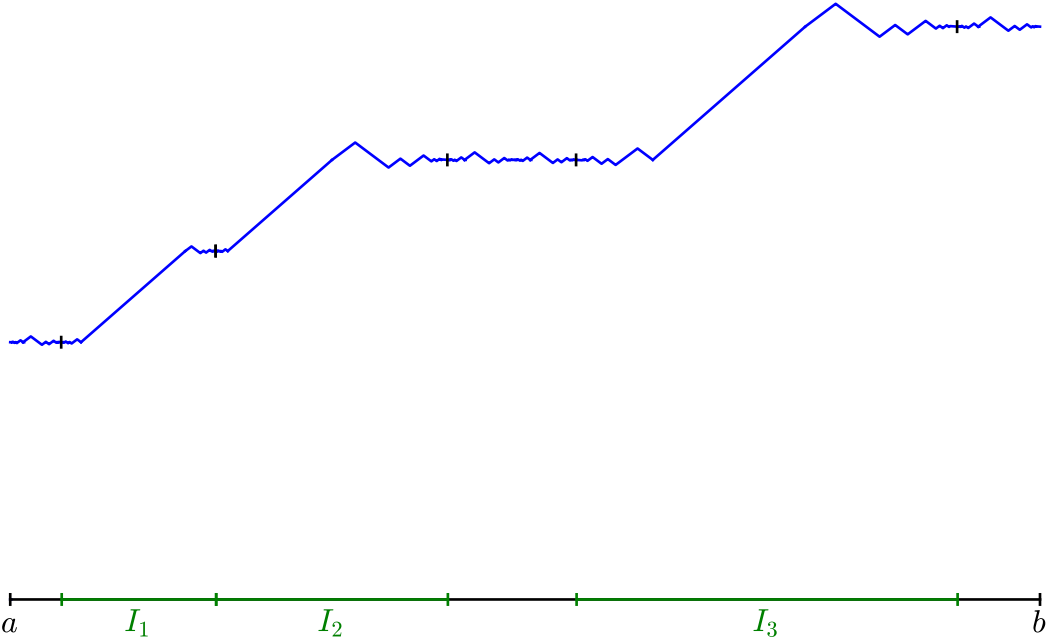}
\vspace*{-1cm}
\caption{Graph of $f_{n}$ on $I=[a,b]$}\label{ab_fig}
\end{center}
\end{figure}

Let $I'=(a',b')$ be a component of $I\cap G_n$.
We consider two cases:

\begin{enumerate}[(a)]
\item\label{keves}  Let $I'\in \{I_1,\ldots,I_{k^*}\}$.
As $a'$ is a right density point of $E$, if we choose an $a'_{0}\in I'$ close enough to $a'$, then by \eqref{suru_int} there is a $b'_{0}\in (a'_{0},b')$ such that
$$
\Big  (1-\frac{1}{n} \Big )(b'_{0}-a'_{0}) < f_n( b')-f_n( a') < |(a'_{0},b'_{0})\cap{E}|.
$$
\begin{equation}\label{*deffnaov}
\text{Set $f_n(a'_{0}) := f_n(a')$ and $f_n(b'_{0}) := f_n(b')$ and let $f_n$ be linear on $\left[a'_{0},b'_{0}\right]$.}
\end{equation}

Define $D_n$ on $[a'_{0},b'_{0}]$ such that $D_n\cap [a'_{0},b'_{0}] := \left\{a'_{0},b'_{0}\right\}$.
We have that
\begin{equation}\label{kozepen}
\text{$\circledast$ (\ref{meredek}) holds on $(a'_{0},b'_{0})$, and $\circledast$ (\ref{surun_meredek}) and (\ref{f_lapos}) hold on $[a'_{0},b'_{0}]$.}
\end{equation}

\item\label{sok} If $I'=(a',b')$ is a component of $I\cap G_n\setminus \{I_1,\ldots,I_{k^*}\}$, then set
$$
f_n(a') := f_n(b') := f_n(\max(\{a\} \cup \left\{b_i | b_i\le a'\right\})).
$$
\end{enumerate}

In the following, we will define $f_n$, $\mathcal{E}_n$, $\mathcal{E}^n$ and $D_n$ in an arbitrary component $I'$ of $I\cap G_n$.
If we do not mention which case we investigate, the statements will hold in both cases \eqref{keves} and \eqref{sok}. However, if $I'\notin\{I_1,\ldots,I_{k^*}\}$, then we put $a'_0 := b'_0 := \frac{b'+a'}{2}$ and $f_{n}(a_{0}'):=f_{n}(b_{0}'):=f_{n}(a')=f_{n}(b')$.

Let $l':= \max\{l | I'\subset \widetilde{G}^l_n\}$.
We will define a strictly decreasing sequence $(a'_{k})_{k=1}^\infty$ in $(a',a'_{0}]$ converging to $a'$.
Suppose that we have already defined $a'_0,\ldots,a'_{k-1}$ for some $k\in\N$.
We choose $a'_k\in (a',a'_{k-1})$ to satisfy
\begin{equation}\label{lapos}
\begin{gathered}
|(a'_k,a'_{k-1})| 
=  \min\Big \{\frac{1}{n+l'}|(a',a'_{k-1})|,\frac{1}{k}|(a',a'_{k-1})|+4(n+l')|(a',a'_{k-1})\setminus E|\Big \}.
\end{gathered}
\end{equation}


Next we show that 
$\lim_{k\to\infty} a'_k = a'$. Since $a'_k$ is monotone decreasing and bounded by $a'$ from below it has a finite limit $a''$. If $a'=a''$ then we are done. If $a''>a'$ then for large enough $k$ \eqref{lapos} implies that
$|(a'_k,a'_{k-1})| \ge |(a',a'_{k-1})|/k \ge |(a',a'')|/k$. Since $\sum \frac{1}{k}$ diverges, this is impossible.

By \eqref{suru_int} we have that
\begin{equation*} 
\begin{split}
4(n+l')|(a',a'_{k-1})\setminus E| 
< \frac{1}{n+l'}|(a',a'_{k-1})|,
\end{split}
\end{equation*}
hence using the fact that $4(n+l')|(a',a'_{k-1})\setminus E| $ is less than the second 
expression in $\min\{ \ , \  \}$ of \eqref{lapos} we obtain that
\begin{equation*} 
\begin{gathered}
4(n+l')|(a',a'_{k-1})\setminus E| 
 < |(a'_k,a'_{k-1})|.
\end{gathered}
\end{equation*}
This implies that
\begin{equation}\label{meredek_2}
\begin{split}
|(a'_{k},a'_{k-1})\setminus E| &\le |(a',a'_{k-1})\setminus E| < \frac{1}{4(n+l')}|(a'_{k},a'_{k-1})|.
\end{split}
\end{equation}

As $a'$ has been defined to be a right density point of $E$, by \eqref{lapos} we have
\begin{equation}\label{nulla}
\lim_{k\to\infty} \dfrac{|(a'_k,a'_{k-1})|}{|(a',a'_{k-1})|} \le \lim_{k\to\infty} \Big (\frac{1}{k}+4(n+l')\frac{|(a',a'_{k-1})\setminus {E}|}{|(a',a'_{k-1})|}\Big ) = 0.
\end{equation}

We define a sequence $(b'_k)_{k=1}^\infty$ in $(b_0,b')$ similarly.

\begin{figure}[!htb]
\begin{center}
\includegraphics[width=1 \textwidth, clip]{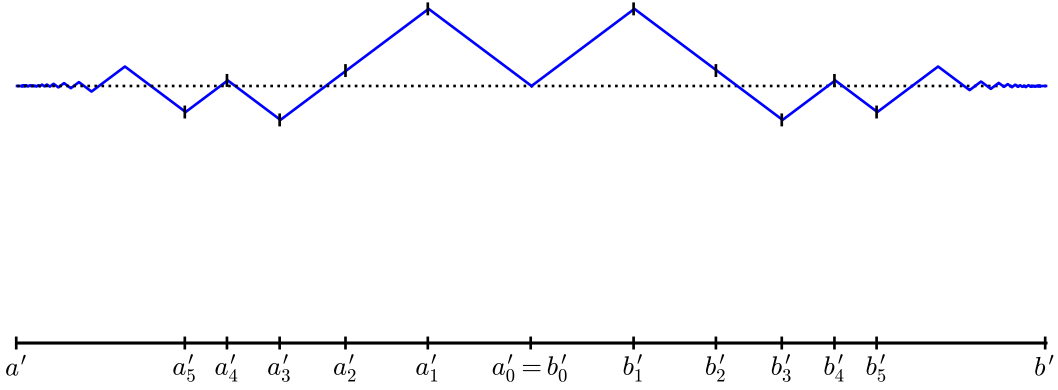}
\vspace*{-1cm}
\caption{The graph of $f_{n}$ on $I'=[a',b']$ if $I'\notin\{I_1,\ldots,I_{k^*}\}$}\label{cc_fig}
\end{center}
\end{figure}

For every $k\in\N$ let
\begin{equation}\label{cc}
f_n(a'_{k}) :=
\begin{cases}
f_n(a'_{k-1})+(1-\frac{1}{n})|(a'_{k-1},a'_{k})| \text{ if } f_n(a'_{k-1}) \le f_n(a')=f_{n}(a_{0}'), \\
f_n(a'_{k-1})-(1-\frac{1}{n})|(a'_{k-1},a'_{k})| \text{ if } f_n(a'_{k-1}) > f_n(a')=f_{n}(a_{0}'),
\end{cases}
\end{equation}
and let $f_n$ be linear on $[a'_{k},a'_{k-1}]$.
We define $f_n$ in an analogous way on $(b'_{0},b')$ using $(b'_{k})_{k=1}^\infty$  in place of $(a'_{k})_{k=1}^\infty$.

\begin{figure}[!htb]
\begin{center}
\includegraphics[width=1 \textwidth, clip]{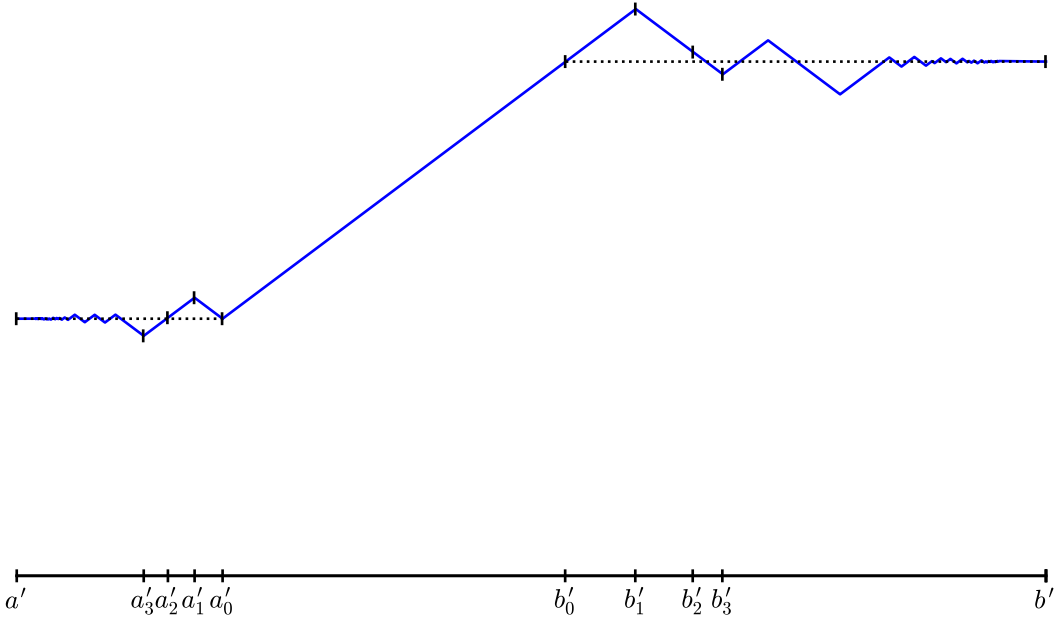}
\vspace*{-1cm}
\caption{The graph of $f_{n}$ on $I'=[a',b']$ if $I'\in\{I_1,\ldots,I_{k^*}\}$}\label{cc_ugras_fig}
\end{center}
\end{figure}

From definition \eqref{cc} and $1-\frac{1}{1}=0$ it follows that
\begin{equation}\label{konstans}
\begin{split} 
\text{if $n=1$, then $f_n|_{[a',a'_0]}\equiv f_n(a')=f_n(a'_0)$ and $f_n|_{[b'_0,b']}\equiv f_n(b'_0)=f_n(b')$.} 
\end{split}
\end{equation}
Suppose that $n>1$. By \eqref{lapos},
\begin{equation}\label{*esti3}
(a_{k-1}'-a_{k}') \le  \frac{1}{n+l'}(a_{k-1}'-a') \le \frac{1}{3}(a_{k-1}'-a').
\end{equation}

Next we show  that for all $k=0,1,...$
\begin{equation}\label{*estim}
|f_{n}(a_{k}')-f_{n}(a')|<\Big(1-\frac{1}{n}\Big)(a_{k}'-a').
\end{equation}

Observe that $0=|f_{n}(a_{0}')-f_{n}(a')|<(1-\frac{1}{n})(a_{0}'-a')$
and hence \eqref{*estim} holds for $k=0$.

Suppose that for a $k\geq 0$ we have \eqref{*estim}.

If $(f_{n}(a_{k+1}')-f_n(a'))\cdot (f_{n}(a_{k}')-f_n(a')) > 0$
then our definition in \eqref{cc} implies that \eqref{*estim}
holds for $k+1$ instead of $k$.

If $(f_{n}(a_{k+1}')-f_n(a'))\cdot (f_{n}(a_{k}')-f_n(a')) \leq 0$ then
\begin{equation*}
|f_{n}(a_{k+1}')-f_{n}(a')|
\leq |f_{n}(a_{k+1}')-f_{n}(a_{k}')|
\underset{\text{by \eqref{cc} }}{=} \Big(1-\frac{1}{n}\Big) |[a'_{k+1},a'_k]| 
\end{equation*}
$$
\underset{\text{by \eqref{*esti3} }}{\le} \Big(1-\frac{1}{n}\Big)\frac{1}{3}(a_{k}'-a')
\underset{\text{by \eqref{*esti3} }}{\le} 
\Big(1-\frac{1}{n}\Big)\cdot\frac{1}{3}\cdot \frac{3}{2} (a_{k+1}'-a').$$
Therefore, by using \eqref{cc} and\eqref{*estim} one can see that
\begin{equation}\label{changes sign}
\begin{split}
\text{ if $n>1$, then $\big(f_{n}(a_{k}')-f_n(a')\big)_{k=1}^\infty$ changes its sign infinitely often} \\
\text{ and similarly $\big(f_{n}(b_{k}')-f_n(b')\big)_{k=1}^\infty$ changes its sign infinitely often}.
\end{split}
\end{equation}
It also follows from \eqref{cc} that if $x$ is a local extremum of $f_{n}$
in $(a',a_{0}')$, then there exists $k_{x}>0$ such that $x=a_{k_{x}}$ and
\begin{equation}\label{*eqchsign}
(f_{n}(a_{k_{x}-1}')-f_n(a'))\cdot (f_{n}(a_{k_{x}}')-f_n(a')) \le 0.
\end{equation}
However, it may happen for some $k\in\N$ that $(f_{n}(a_{k-1}')-f_n(a'))$ and $(f_{n}(a'_k)-f_n(a'))$ are of the same sign.

Set
$$
D_n\cap I' := \{a'_0,a'_1,\ldots\} \cup \{b'_0,b'_1,\ldots\}.
$$
This definition means that
\begin{equation}\label{szoveg3_holds}
\text{$D_n$ satisfies (\ref{szoveg3}) on $I'$.}
\end{equation}
By \eqref{G^1_n} we have that $|I'|\le\frac{1}{n}$, hence $D_n\cap I'$ is a $\frac{1}{n}$-mesh on $I'$.
Thus by \eqref{kozepen} and \eqref{cc}, $\circledast$
\text{(\ref{surun_meredek}) is true on $I'$.}

\begin{figure}[!htb]
\begin{center}
\includegraphics[width=1 \textwidth, clip]{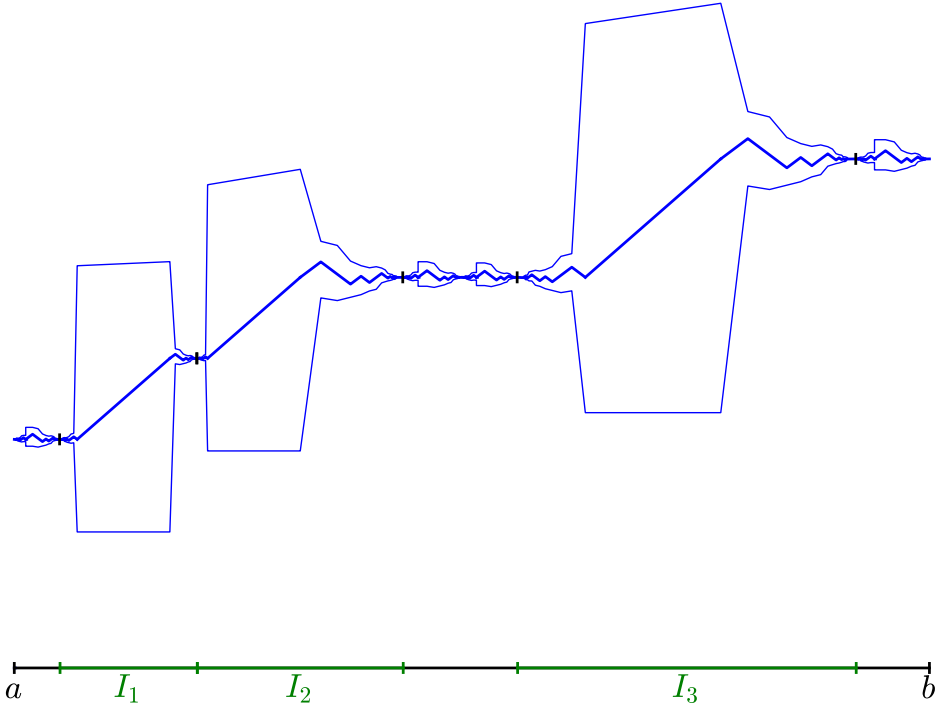}
\vspace*{-1cm}
\caption{The graphs of $\mathcal{E}_n$, $\mathcal{E}^n$ and $f_n$ on $I=[a,b]$}\label{ab_boritekkal_fig}
\end{center}
\end{figure}

By \eqref{kozepen}, \eqref{meredek_2} and \eqref{cc} we have that
\begin{equation}\label{f_lapos_holds}
\text{$f_n$ and $D_n$ satisfy (\ref{f_lapos}) on $I'$.}
\end{equation}
Moreover, \eqref{kozepen} and \eqref{cc} also imply that 
\begin{equation}\label{meredek_holds}
\text{$\circledast$ (\ref{meredek}) holds on all of $(a',b')$.}
\end{equation}
According to \eqref{konstans}, \eqref{nulla}, \eqref{cc} and \eqref{changes sign}
\begin{equation}\label{szelen lapos}
\text{the right derivative of $f_n$ is $0$ at $a'$ and the left derivative of $f_n$ is $0$ at $b'$.}
\end{equation}


If $d_1$, $d_2$ and $d_3$ are adjacent points of $D_n\cap I'$ and $d_1<d_2<d_3$, then we set
\begin{equation*}
\begin{split}
\mathcal{E}_n(d_2) := \min\left\{f_n(d_1),f_n(d_2),f_n(d_3)\right\} - \max\left\{d_2-d_1,d_3-d_2\right\}
\end{split}
\end{equation*}
and
\begin{equation*}
\begin{split}
\mathcal{E}^n(d_2) := \max\left\{f_n(d_1),f_n(d_2),f_n(d_3)\right\} + \max\left\{d_2-d_1,d_3-d_2\right\}.
\end{split}
\end{equation*}
Define $\mathcal{E}_n$ and $\mathcal{E}^n$ to be linear between adjacent points of $D_n$.
This definition immediately implies that
\begin{equation}\label{szoveg2_holds}
\text{$f_n$, $\mathcal{E}_n$, $\mathcal{E}^n$ and $D_n$ satisfy (\ref{szoveg2}) on $I'$,}
\end{equation}
and
\begin{equation}\label{f_n_kozepen}
\mathcal{E}_n \le f_n \le \mathcal{E}^n \text{ on $I=(a,b)$}.
\end{equation}
By \eqref{nulla} and \eqref{szelen lapos} we obtain that
\begin{equation}\label{szelen lapos2}
\begin{gathered}
\text{the right derivatives of $\mathcal{E}_n$ and $\mathcal{E}^n$ are $0$ at $a'$ and} \\
\text{the left derivatives of $\mathcal{E}_n$ and $\mathcal{E}^n$ are $0$ at $b'$.}
\end{gathered}
\end{equation}

Recall that $I$ was defined in \eqref{fix_on_D_n_holds}.
For every $x\in I = (a,b)$ we have that
\begin{equation*}
\begin{split}
\mathcal{E}_n(x) \ge& \min\{f_n(x') : x'\in [a,b]\} - \\
&-\max\{|d-d'| : \text{$d$ and $d'$ are adjacent elements of $D_n\cap I$}\}.
\end{split}
\end{equation*}
Hence by \eqref{meredek_holds}, \eqref{on_compl} and \eqref{G^1_n}
\begin{equation*}
\begin{split}
\mathcal{E}_n(x) \ge&\Big  (\min\{f_{n}(a),f_{n}(b)\} - \frac{b-a}{2}\Big ) - \frac{b-a}{2} \\
=& \min\{f_n(a),f_n(b)\} - (b-a) = \min\{f_{n-1}(a),f_{n-1}(b)\} - (b-a)
\end{split}
\end{equation*}
thus by \eqref{szoveg2}
$$
\mathcal{E}_n(x) \ge \min\{f_{n-1}(a),f_{n-1}(b)\} - (\min\{f_{n-1}(a),f_{n-1}(b)\} - \mathcal{E}_{n-1}(x)) = \mathcal{E}_{n-1}(x),
$$
and similarly $\mathcal{E}^n(x) \le \mathcal{E}^{n-1}(x)$.
Hence \eqref{f_n_kozepen} implies that
\begin{equation}\label{boritek_holds}
\circledast \text{ (\ref{boritek}) holds on $I$ for $n$,}
\end{equation}
since it held in the previous steps of the induction.

Take a component $I'=(a',b')$ of $I\cap G_n\setminus \{I_1,\ldots,I_{k^*}\}$.
Let $x\in (a',a'_0)$.
We want to prove that
$$
\frac{|f_n(x)-f_n(a')|}{x-a'} \le \frac{1}{n+l'-1}.
$$
We know that $f_n$ is linear between $a_k'$ and $a_{k-1}'$ for every $k\in\N$, there are infinitely many local extremum points in $\{a_0',a_1',\ldots\}$ by \eqref{*eqchsign}, and $\Lip(f_n) = 1-\frac{1}{n}$ on $(a',a'_0)$ by \eqref{cc}. 
Consequently, we can assume that $x$ is a local extremum point of $f_n$, i.e. $x=a'_{k_x}$ for some $k_x\in\N$. 
We can also suppose without loss of generality that $f_n(a'_{k_x-1})>f_n(a')$, hence 
by \eqref{*eqchsign},
$f_n(a'_{k_x})\le f_n(a')$. 
Thus 
\begin{align*}
\frac{|f_n(x)-f_n(a')|}{x-a'} &= \frac{|f_n(a'_{k_x})-f_n(a')|}{a'_{k_x}-a'} \le \frac{| f_n(a'_{k_x})-f_n(a'_{k_x-1})|}{a'_{k_x}-a'} \le \frac{a'_{k_x-1}-a'_{k_x}}{a'_{k_x}-a'} \\
&= \frac{a'_{k_x-1}-a'_{k_x}}{a'_{k_x-1}-a'-(a'_{k_x-1}-a'_{k_x})} \\
&\underset{\text{by \eqref{lapos} }}{\le} \frac{a'_{k_x-1}-a'_{k_x}}{(n+l')(a'_{k_x-1}-a'_{k_x})-(a'_{k_x-1}-a'_{k_x})} = \frac{1}{n+l'-1}.
\end{align*}
We can prove similarly for every $x\in (a',b')$ that
\begin{align}\label{f_n_lapos}
\max\Big \{\frac{|f_n(x)-f_n(a')|}{x-a'},\frac{|f_n(x)-f_n(b')|}{b'-x}\Big \}
\le \frac{1}{n+l'-1}.
\end{align}
Let $d_1,d_2,d_3,d_4\in D_n\cap I'$ be adjacent and increasing in this order.
If $x\in [d_2,d_3]$ then by the definition of $\mathcal{E}^n$ we have that 
\begin{equation}\label{E^n_lapos}
\begin{gathered}
\frac{\mathcal{E}^n(x)-\mathcal{E}^n(a')}{x-a'} \le \frac{\max\{\mathcal{E}^n(d_2),\mathcal{E}^n(d_3)\}-\mathcal{E}^n(a')}{d_2-a'} \\
\le \frac{\max\left\{f_n(d_1),f_n(d_2),f_n(d_3),f_n(d_4)\right\} + \max\left\{d_2-d_1,d_3-d_2,d_4-d_3\right\}-\mathcal{E}^n(a')}{d_2-a'}.
\end{gathered}
\end{equation} 
By \eqref{f_n_lapos}
\begin{equation*}
\begin{gathered}
\max\left\{f_n(d_1),f_n(d_2),f_n(d_3),f_n(d_4)\right\} - \mathcal{E}^n(a') \\
= \max\left\{f_n(d_1),f_n(d_2),f_n(d_3),f_n(d_4)\right\} -f_n(a') \le \frac{1}{n+l'-1}(d_4-a'),
\end{gathered}
\end{equation*}
by \eqref{lapos}
$$
\max\left\{d_2-d_1,d_3-d_2,d_4-d_3\right\} \le \frac{1}{n+l'}(d_4-a')
$$
and
$$
d_2-a' = \frac{d_2-a'}{d_3-a'} \cdot \frac{d_3-a'}{d_4-a'} \cdot (d_4-a') \ge \Big (1-\frac{1}{n+l'}\Big )^2 (d_4-a').
$$
Writing these inequalities into \eqref{E^n_lapos} we have
\begin{equation*}
\begin{split}
\frac{\mathcal{E}^n(x)-\mathcal{E}^n(a')}{x-a'} &\le \frac{\frac{1}{n+l'-1}(d_4-a')+\frac{1}{n+l'}(d_4-a')}{(1-\frac{1}{n+l'})^2 (d_4-a')} = \frac{\frac{1}{n+l'-1}+\frac{1}{n+l'}}{(1-\frac{1}{n+l'})^2 } \\
&\le \frac{\frac{1}{n+l'-1}+\frac{1}{n+l'-1}}{(1-\frac{1}{n+l'})^2} \le \frac{\frac{1}{n+l'-1}+\frac{1}{n+l'-1}}{(\frac{1}{2})^2} \le \frac{8}{n+l'-1}.
\end{split}
\end{equation*}



We can prove similarly that for every $x\in I'$
\begin{equation}\label{boritek_lapos}
\begin{split}
&\max\left\{\frac{|\mathcal{E}_n(x)-\mathcal{E}_n(a')|}{x-a'},\frac{|\mathcal{E}_n(x)-\mathcal{E}_n(b')|}{b'-x}, \right. \\
&\hphantom{\max\bigg\{}\left. \frac{|\mathcal{E}^n(x)-\mathcal{E}^n(a')|}{x-a'},\frac{|\mathcal{E}^n(x)-\mathcal{E}^n(b')|}{b'-x}\right\}
< \frac{8}{n+l'-1}.
\end{split}
\end{equation}

Next we show that the right derivative of $\mathcal{E}^n$ is $0$ on $F_n\cap [a,b)$.
Let $\eps>0$ and $x\in I\setminus G_n$.
Suppose that $x$ is not the left endpoint of a component of $G_n$ ( by \eqref{szelen lapos2}, in such endpoints $\mathcal{E}^n$ has $0$ right derivative). Then there is a positive $\delta$ such that
\begin{equation}\label{x_korny}
(x,x+\delta)\cap (I_1\cup\ldots\cup I_{k^*} \cup \Union_{l=1}^{\left\lceil 8\eps^{-1} \right\rceil} \Union_{k=1}^{\infty} I^l_n) = \emptyset.
\end{equation}
Take an arbitrary $y\in (x,x+\delta)$.
If $y\in I\setminus G_n$, then $\mathcal{E}^n(x)=\mathcal{E}^n(y)$ by \eqref{on_compl} and \eqref{x_korny}. Otherwise, we denote by $J=(a_J,b_J)$ the component of $G_n$, which contains $y$.
By \eqref{on_compl} and \eqref{x_korny}, we have that $\mathcal{E}^n(x)=\mathcal{E}^n(a_J)$, and \eqref{boritek_lapos} and \eqref{x_korny} implies
$$
\frac{|\mathcal{E}^n(y)-\mathcal{E}^n(a_J)|}{y-a_J} \le \frac{8}{n+8\left\lceil \eps^{-1} \right\rceil+1-1},
$$
hence
\begin{align*}
\frac{|\mathcal{E}^n(y)-\mathcal{E}^n(x)|}{y-x} &= \frac{|\mathcal{E}^n(y)-\mathcal{E}^n(a_J)|}{y-x} < \frac{|\mathcal{E}^n(y)-\mathcal{E}^n(a_J)|}{y-a_J} \\
&\le \frac{8}{n+8\left\lceil \eps^{-1} \right\rceil+1-1} \le \eps.
\end{align*}
It can be  verified  similarly that the left derivative of $\mathcal{E}^n$ is $0$ in $(a,b]$, and the same procedure works for $\mathcal{E}_n$.
As $I$ is an arbitrary interval contiguous to $F_{n-1}\cup D_{n-1}$, by \eqref{szoveg3} we have that $\mathcal{E}'_n = (\mathcal{E}^n)' = 0$ on $F_n\setminus F_{n-1}$.
Hence \eqref{boritek_holds} and the induction hypothesis imply that
$\circledast$ {we have proved (\ref{E_n flat}) on $I$.}


The places marked by $\circledast$ imply that all \eqref{mm}, \eqref{mm2}, \eqref{surun_meredek}, \eqref{meredek}, \eqref{E_n flat}, \eqref{fix_on_D_n} and \eqref{boritek} are satisfied for $n$, and by induction for all $n$s.
Moreover, all the assumptions \eqref{szoveg2}, \eqref{f_lapos} and \eqref{szoveg3} of the next induction step are satisfied by \eqref{szoveg2_holds}, \eqref{f_lapos_holds} and \eqref{szoveg3_holds}. This concludes the proof.
\end{proof}

\begin{acknowledgement}
We thank the referees for several comments which improved our paper.
\end{acknowledgement}


\end{document}